\documentclass[11pt]{amsart}
\usepackage[T1]{fontenc}
\usepackage[utf8]{inputenc}
\usepackage{amsmath,amssymb,amsthm,amsxtra,mathtools}
\usepackage[varg]{txfonts}
\usepackage{microtype}
\usepackage[dvipsnames,svgnames,table]{xcolor}
\usepackage{aliascnt}
\usepackage[linktocpage=true,colorlinks=true,linkcolor=Blue,
  citecolor=BrickRed,urlcolor=RoyalBlue]{hyperref}
\usepackage[nameinlink,noabbrev]{cleveref}

\hypersetup{
  pdfauthor={Thialita M. Nascimento and Eduardo V. Teixeira},
  pdftitle={Sharp  Hessian integrability for fully nonlinear elliptic supersolutions in low dimensions}
}

\numberwithin{equation}{section}

\theoremstyle{plain}

\newtheorem{theorem}{Theorem}[section]

\newaliascnt{proposition}{theorem}
\newtheorem{proposition}[proposition]{Proposition}
\aliascntresetthe{proposition}

\newaliascnt{lemma}{theorem}
\newtheorem{lemma}[lemma]{Lemma}
\aliascntresetthe{lemma}

\newaliascnt{corollary}{theorem}

\aliascntresetthe{corollary}

\theoremstyle{definition}

\newaliascnt{definition}{theorem}

\aliascntresetthe{definition}

\theoremstyle{remark}

\newaliascnt{remark}{theorem}
\newtheorem{remark}[remark]{Remark}
\aliascntresetthe{remark}

\crefname{theorem}{theorem}{theorems}
\Crefname{theorem}{Theorem}{Theorems}
\crefname{proposition}{proposition}{propositions}
\Crefname{proposition}{Proposition}{Propositions}
\crefname{lemma}{lemma}{lemmas}
\Crefname{lemma}{Lemma}{Lemmas}
\crefname{corollary}{corollary}{corollaries}
\Crefname{corollary}{Corollary}{Corollaries}
\crefname{definition}{definition}{definitions}
\Crefname{definition}{Definition}{Definitions}
\crefname{remark}{remark}{remarks}
\Crefname{remark}{Remark}{Remarks}

\newcommand{\Rn}{\mathbb R^n}

\newcommand{\norm}[1]{\left\lVert #1\right\rVert}
\newcommand{\abs}[1]{\left\lvert #1\right\rvert}

\newcommand{\Mminus}{\mathcal M^-_{\lambda,\Lambda}}
\newcommand{\Mplus}{\mathcal M^+_{\lambda,\Lambda}}
\newcommand{\ThetaU}{\underline\Theta}
\newcommand{\eps}{\varepsilon}
\newcommand{\dd}{\,\mathrm d}

\newcommand{\tr}{\operatorname{tr}}
\newcommand{\indm}{\operatorname{ind}_{-}}

\title[Sharp Hessian integrability in low dimensions]
{Sharp Hessian integrability for fully nonlinear elliptic supersolutions in low dimensions}

\author[T. M. Nascimento]{Thialita M. Nascimento}
\address[T. M. Nascimento]{Department of Mathematics, Universidade Federal da Para\'iba, Jo\~ao Pessoa, PB, Brazil}
\email{tnascimento@mat.ufpb.br}

\author[E. V. Teixeira]{Eduardo V. Teixeira}
\address[E. V. Teixeira]{Department of Mathematics, Oklahoma State University, Stillwater, Oklahoma, USA}
\email{eduardo.teixeira@okstate.edu}

\subjclass[2020]{35J60, 35B65, 35D40}
\keywords{Fully nonlinear elliptic equations, viscosity supersolutions, Hessian integrability, contact geometry, negative index, Pucci operators}

\begin{document}

\begin{abstract}
We settle the planar sharp Hessian-integrability conjecture of Armstrong, Silvestre, and Smart [\emph{Comm. Pure Appl. Math.} \textbf{65} (2012), 1169--1184] for viscosity supersolutions of fully nonlinear uniformly elliptic equations.  If $\kappa=\Lambda/\lambda$, then the optimal exponent in the $W^{2,\eps}$ regularity theory in the plane is exactly
\[
  \eps_2(\kappa)=\frac{2}{\kappa+1}.
\]
The same mechanism reaches the known upper obstruction in dimension three throughout the full range $1\le\kappa\le4$ and therefore gives
\[
  \eps_3(\kappa)=\frac{3}{2\kappa+1}
  \qquad(1\le\kappa\le4).
\]
Beyond this threshold, it yields a closed algebraic lower bound and identifies the precise spectral configuration responsible for the remaining gap.
The proof introduces a spectrally resolved continuum-in-opening mechanism
for contact geometry.  Throughout the contact evolution, the vertex
Jacobian retains the negative index of the Hessian and its mean negative
curvature---data erased by the classical pointwise reduction---and these
variables are optimized only after integration over all openings.  This
reduces the analysis to a finite family of explicit one-dimensional
kernels whose critical exponents recover the sharp planar threshold,
identify the exact three-dimensional regime above, and expose a broader
principle for extracting second-order regularity from the spectral
geometry of contact sets.
\end{abstract}

\maketitle

\section{Introduction}

\subsection{The sharp exponent problem}

The one-sided $W^{2,\eps}$ estimate controls the distribution of the least
opening of a paraboloid supporting a supersolution from below.  It was first
established for strong solutions of linear nondivergence equations by Lin
\cite[Theorem, p.~448]{Lin}.  Caffarelli subsequently extended the estimate
to viscosity solutions \cite[Corollary~3]{Caffarelli}; see also
\cite[Proposition~7.4 and Lemma~7.8]{CaffarelliCabre}.  The exponent also
enters directly into the partial-regularity theorem of Armstrong, Silvestre,
and Smart: it provides the corresponding lower bound for the codimension of
the singular set
\cite[Theorem~1 and Remark~5.4]{ArmstrongSilvestreSmart}.  Determining its
optimal value is therefore a basic structural problem in the regularity
theory of uniformly elliptic equations.

A polynomial lower bound for the exponent in terms of the ellipticity ratio
was obtained by Le \cite{Le}. Mooney subsequently developed a
global convex-analytic method, based on sliding paraboloids and the area
formula, which improved this dependence to the optimal order in the planar
case \cite{Mooney}. The quantitative bound in higher
dimensions was later sharpened in by the authors in
\cite{NascimentoTeixeiraJMPA}. Despite these
substantial advances, the exact threshold remained unknown even in the
planar case: it was unclear whether the radial-profile obstruction \cite{ArmstrongSilvestreSmart} was decisive or
whether genuinely nonradial contact geometry could force a smaller
exponent.

To place this question in its natural universal setting, let
$0<\lambda\le\Lambda$, set
\[
\kappa:=\frac{\Lambda}{\lambda},
\]
and consider the universal class
\begin{equation}\label{eq:Pucci-supersolution-intro}
\Mminus(D^2u)\le0
\qquad\text{in }B_1\subset\Rn.
\end{equation}
Here
\[
 \mathcal M^-_{\lambda,\Lambda}(M)
 =\lambda\sum_{\mu_i(M)>0}\mu_i(M)
  +\Lambda\sum_{\mu_i(M)<0}\mu_i(M)
\]
is the lower Pucci extremal operator.  We use the viscosity convention of \cite[Definition~2.2 and Remarks~2.3]{CIL}.  Every viscosity supersolution of a $(\lambda,\Lambda)$-elliptic equation $F(x,D^2u)=0$, normalized by $F(x,0)=0$, satisfies \eqref{eq:Pucci-supersolution-intro}; hence estimates for this class are universal within the context of fully nonlinear elliptic problems.

For a bounded domain $\Omega$ and $x\in\Omega$, let $\ThetaU(u,\Omega)(x)$ be the least $A\ge0$ for which a paraboloid of opening $-A$ touches $u$ from below at $x$ throughout $\Omega$.  Given $q>0$, let $\mathsf E_q(n,\kappa)$ denote the assertion that
\begin{equation}\label{eq:Eq-definition}
 \int_{B_{1/2}}\ThetaU(u,B_1)^q\dd x
 \le C(n,\kappa,q)\norm{u}_{L^\infty(B_1)}^q
\end{equation}
for every $u\in C(\overline B_1)$ satisfying
\eqref{eq:Pucci-supersolution-intro}, and define
\begin{equation}\label{eq:def-sharp-threshold}
 \eps_n(\kappa):=\sup\big\{q>0:\mathsf E_q(n,\kappa)
 \text{ holds} \big\}.
\end{equation}
Throughout the paper, $\abs{M}:=\bigl(\tr(M^2)\bigr)^{1/2}$ denotes the
Hilbert--Schmidt norm of a symmetric matrix $M$.  At a point of twice
differentiability at which $\ThetaU(u,B_1)$ is finite, each negative
eigenvalue of $D^2u$ has magnitude at most $\ThetaU(u,B_1)$.  The Pucci
inequality then bounds the sum of the positive eigenvalues by $\kappa$
times the total negative curvature.  Consequently,
\begin{equation}\label{eq:Hessian-controlled-by-Theta}
 \abs{D^2u(x)}
 \le C(n,\kappa)\ThetaU(u,B_1)(x).
\end{equation}
Thus \eqref{eq:def-sharp-threshold} is the natural universal one-sided
Hessian-integrability threshold.

Armstrong, Silvestre, and Smart in \cite{ArmstrongSilvestreSmart} constructed a planar example showing that
\[
  \eps_2(\kappa)\le\frac{2}{\kappa+1}
\]
and conjectured equality; see \cite[Remark~3.3 and Conjecture~3.4]{ArmstrongSilvestreSmart}.  In dimensions $n\ge3$, the construction assembled from translated radial profiles in \cite{NascimentoTeixeiraJMPA} gives the sharper obstruction
\begin{equation}\label{eq:radial-upper-intro}
  \eps_n(\kappa)\le\frac{n}{(n-1)\kappa+1}.
\end{equation}
Our later planar estimate reached a fixed proportion of the conjectured
value, uniformly as $\kappa\to\infty$; see
\cite[Theorem~1]{NascimentoTeixeiraJDE}. The sharp endpoint nevertheless
remained out of reach, leaving the conjecture unresolved even in its first
nontrivial dimension.

\subsection{The loss hidden in the contact argument}

Sliding paraboloids from a convex envelope yields a vertex map with an
exact Jacobian.  At each new contact point, this Jacobian records the entry
opening, the negative index
\[
  r=\indm(D^2u),
\]
and the mean magnitude of the negative eigenvalues.  Here $\indm(M)$
denotes the number of strictly negative eigenvalues of $M$, counted with
multiplicity.  The usual pointwise optimization discards the coupling
between this spectral information and the opening at which contact occurs.

We introduce a spectrally resolved continuum-in-opening method that treats
the nested contact sets as a single geometric evolution.  The full contact
state---entry opening, negative index, and mean negative curvature---is
carried through the area formula and preserved until the evolution has
been integrated.  The essential innovation is this persistence of the
spectral data, not merely their subsequent optimization.  Since a new
contact layer may contain several negative indices, we stratify it by the
actual value of $r$ before applying the area formula.  Each stratum then
produces its own sharp determinant law and explicit one-dimensional
aperture kernel.  Previous arguments collapse these strata into a single global index bound,
thereby irreversibly discarding the spectral information needed to reach
the sharp low-dimensional thresholds.

\subsection{The planar theorem and a sharp three-dimensional regime}

Our first theorem settles the conjecture of Armstrong, Silvestre, and Smart.

\begin{theorem}[Sharp planar threshold]\label{thm:planar-intro}
For every $\kappa\ge1$,
\begin{equation}\label{eq:sharp-planar-intro}
{\displaystyle \eps_2(\kappa)=\frac{2}{\kappa+1}.}
\end{equation}
\end{theorem}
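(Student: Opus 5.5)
The upper bound $\eps_2(\kappa)\le 2/(\kappa+1)$ is the Armstrong--Silvestre--Smart radial example, which we quote. The profile $u(x)=|x|^{-\alpha}$ with $\alpha=\kappa-1$ (smoothly truncated near the origin) satisfies $\Mminus(D^2u)\le0$, and $\ThetaU\sim|x|^{-\alpha-2}$. This lies in $L^q$ near $0$ exactly when $q(\kappa+1)<2$. So everything rests on the lower bound: we must show that $\mathsf E_q(2,\kappa)$ holds for every $q<2/(\kappa+1)$. We use the measure-decay reduction. It suffices to prove that the sets $A_t:=\{x\in B_{1/2}:\ThetaU(u,B_1)(x)\le t\}$ satisfy $|B_{1/2}\setminus A_t|\le Ct^{-\eps}$ for every $\eps<2/(\kappa+1)$, with $\norm{u}_{L^\infty}\le1$. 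Integrating this tail then gives \eqref{eq:Eq-definition} for all $q<\eps$.

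To obtain the decay, we slide paraboloids of opening $-a$ from below, following Mooney, as $a$ increases from $a_0\sim1$ to $t$. For a vertex set $V\subset B_1$ of positive measure, the contact set $K_a$ consists of the points touched by some paraboloid $-\frac a2|x-y|^2+c$ with $y\in V$. The sets $K_a$ are nested and increasing in $a$. On the new layer $dK_a=K_{a+da}\setminus K_a$, the vertex map is $y=x+a^{-1}\nabla u(x)$, and its Jacobian is $\det(I+a^{-1}D^2u)$, where $D^2u\ge -aI$ at contact points. In the plane the negative index $r\in\{0,1\}$ along a new layer, and we stratify the layer by $r$.
\begin{itemize}
\item For $r=0$ the Jacobian is at least $1$.
\item For $r=1$, write the eigenvalues as $-s\in[-a,0)$ and $\mu\ge0$. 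The Pucci inequality gives $\mu\le\kappa s$, hence
\[
\det\bigl(I+a^{-1}D^2u\bigr)\le\Bigl(1-\frac sa\Bigr)\Bigl(1+\frac{\kappa s}a\Bigr).
\]
\end{itemize}
The key point is that we keep $s/a$ as a variable and do not optimize it pointwise. Through the area formula, the layer at the entry opening $a$ where $s$ is near $a$ is charged against the degenerate factor $1-s/a$. Once this is integrated over the continuum of openings, we expect an ODE inequality for $m(a):=|B_{1/2}\setminus K_a|$ of the form $m'(a)\le-\frac{2}{\kappa+1}\,\frac{m(a)}{a}$. This is the planar kernel, and it is sharp because the radial example saturates the determinant identity with $s/a$ tending to the value fixed by $\mu=\kappa s$. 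Integrating the inequality gives $m(t)\lesssim t^{-2/(\kappa+1)+\delta}$.

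The main obstacle is deriving this ODE honestly. The naive area formula yields only $|V|\le\int_{K_a}\det(\cdots)$, which is an inequality in the vertex variable. Converting it into decay in $a$ requires tracking the dependence on opening via a covering or doubling of vertex sets across scales, as in the Caffarelli--Cabré $L^\eps$ iteration. It also requires showing that the one-dimensional kernel $k(\theta)=(1-\theta)(1+\kappa\theta)$, integrated in $\log a$, has critical exponent exactly $2/(\kappa+1)$. I would first try an explicit Lyapunov quantity $\int a^{\gamma}\,dm(a)$ and choose $\gamma$ so that its derivative is nonpositive thanks to the kernel bound. This would be checked at $\gamma=2/(\kappa+1)-\delta$ by an elementary one-variable optimization in $\theta\in[0,1]$.
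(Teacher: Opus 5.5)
\emph{The lower bound is not proved.} It is the whole content of the theorem, and the step you call ``the main obstacle'' is left open. The tools you propose for it also cannot give the sharp exponent.

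Sliding paraboloids of opening $-a$ with vertices in a prescribed set $V$ only bounds $\abs{V}$ by $\int_{K_a}\det(I+a^{-1}D^2u)$. That inequality carries no information about the noncontact measure $m(a)$. Shrinking the layer does not help, even in the framework that does see $m(a)$ (described below). Passing from opening $a$ to $a+h$, the vertex map there has Jacobian of order $h^{-2}$. One then gets only $m(a)\lesssim a^2\abs{m'(a)}/h$, which is vacuous as $h\downarrow0$. So no differential inequality $m'(a)\le-\frac{2}{\kappa+1}m(a)/a$ comes out of the contact argument, and the paper never derives one.

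A covering/doubling iteration across scales, or a one-variable maximization of $(1-\theta)(1+\kappa\theta)$, is exactly the pointwise reduction that loses. Maximizing the kernel before integrating in the opening does not reach $2/(\kappa+1)$ when $\kappa>1$. The sharp value is instead the root of the integrated condition $\int_0^1z^{-p}P'_{2,1,\kappa}(z)\dd z=\frac{\kappa-1}{1-p}-\frac{2\kappa}{2-p}=0$.

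\emph{The missing mechanism has two parts.}

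First, Mooney's device links $m(a)$ to new contacts.
\begin{itemize}
\item For compact $F\subset\Omega\setminus A_a$, take paraboloids of opening $-(\rho-1)a$ tangent from below to $w_a=\Gamma^a_u+\frac a2\abs{x}^2$ at points of $F$. By \Cref{lem:vertex-expansion}, their vertices satisfy $\abs{V_F}\ge\abs{F}$.
\item Lift them until they touch $U_a=u+\frac a2\abs{x}^2$. \Cref{lem:separation} and the localization in \Cref{lem:localization} place the first contacts in the interior shell $\{a<\tau\le\rho a\}$.
\item There the vertex map is $x\mapsto(\rho a\,x+\nabla u(x))/((\rho-1)a)$. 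It is normalized by the increment $(\rho-1)a$, not by $a$, and it is Lipschitz by the semiconcavity obtained from inf-convolution.
\item Its Jacobian is at most $(\frac{\rho}{\rho-1})^2P_{2,r,\kappa}(s/(\rho a))$. This gives the shell inequality of \Cref{prop:shell-inequality}, which is nonlocal over the window $[a,\rho a]$ with $\rho$ large.
\end{itemize}

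Second, multiply by $pa^{p-1}$ and apply Tonelli.
\begin{itemize}
\item A point with $\tau=T$ and $s=yT$ lies in every shell with $a\in[T/\rho,T)$. It therefore contributes $T^p\mathcal C(y)$, where $\mathcal C(y)=1+(\Psi(y)-1)\rho^{-p}+O(\rho^{-1})$.
\item Since $\sup_y\Psi<1$ exactly when $p<2/(\kappa+1)$, a large $\rho$ gives $\mathcal C\le\theta<1$. The moment is then absorbed: $(1-\theta)M_L\le C$.
\item The truncated top window is handled by \Cref{lem:upper-cutoff}, and \Cref{lem:inf-convolution} returns to continuous supersolutions.
\end{itemize}

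Your instincts are right: stratify by $r$, keep $s/a$ unoptimized, and use a moment $\int a^\gamma\dd m$. Without the first step, though, they have nothing to act on.

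\emph{Smaller points.}
\begin{itemize}
\item In the plane $r\in\{0,1,2\}$. The stratum $r=2$, with kernel $(1-z)^2$, is allowed by the Pucci inequality and must be included.
\item For $r=0$ you need the upper bound $\det=1$, since Pucci forces $D^2u=0$; ``at least $1$'' is the wrong direction.
\item With a fixed vertex set, the sets $K_a$ need not be nested.
\item Your heuristic for the upper bound is not itself an argument. Truncating $\abs{x}^{-\alpha}$ sends $\norm{u}_{L^\infty(B_1)}$ to infinity, so the local non-integrability of $\abs{x}^{-q(\alpha+2)}$ does not by itself contradict \eqref{eq:Eq-definition}. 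Rely on the citation of \cite{ArmstrongSilvestreSmart}, as the paper does.
\end{itemize}
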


The same mechanism produces a second sharp result in dimension three.  This is not merely a perturbative statement near the Laplacian: the lower theory reaches the radial-profile obstruction throughout the entire interval $1\le\kappa\le4$.  Beyond that threshold, the two possible negative indices exchange order and yield an explicit remaining gap.

\begin{theorem}[Dimension three]\label{thm:three-dimensional-intro}
For every $\kappa\ge1$, the estimate $\mathsf E_p(3,\kappa)$ holds whenever $0<p<\underline p_3(\kappa)$, where
\begin{equation}\label{eq:3d-lower-intro}
 \underline p_3(\kappa)=
 \begin{cases}
 \displaystyle \frac{3}{2\kappa+1},&1\le\kappa\le4,\\[1.2ex]
 \displaystyle \frac{\kappa+10-\sqrt{\kappa^2+20\kappa+4}}{2(\kappa+2)},&\kappa>4.
 \end{cases}
\end{equation}
Consequently,
\begin{equation}\label{eq:3d-exact-intro}
{\displaystyle
  \eps_3(\kappa)=\frac{3}{2\kappa+1}
  \quad\text{for }1\le\kappa\le4.}
\end{equation}
For $\kappa>4$,
\begin{equation}\label{eq:3d-gap-intro}
 \frac{\kappa+10-\sqrt{\kappa^2+20\kappa+4}}{2(\kappa+2)}
 \le\eps_3(\kappa)
 \le\frac{3}{2\kappa+1}.
\end{equation}
\end{theorem}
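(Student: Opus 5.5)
The proof has two halves. The upper bound in \eqref{eq:3d-gap-intro}, together with the inequality ``$\le$'' in \eqref{eq:3d-exact-intro}, is just the radial-profile obstruction \eqref{eq:radial-upper-intro} specialized to $n=3$, where it reads $\eps_3(\kappa)\le 3/(2\kappa+1)$. The substance is the lower bound: I must show that $\mathsf E_p(3,\kappa)$ holds for every $p<\underline p_3(\kappa)$. Once this is done, \eqref{eq:3d-exact-intro} follows because $\underline p_3(\kappa)=3/(2\kappa+1)$ on $[1,4]$, and \eqref{eq:3d-gap-intro} follows because the second branch lies below the first for $\kappa>4$.

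For the lower bound I would run the sliding-paraboloid scheme. After the standard normalization ($\norm{u}_{L^\infty}\le1$, localization, covering), consider for each opening $A$ the contact set $K_A$: the points where some paraboloid of opening $-A$, with vertex in a fixed ball, touches $u$ from below. The sets $K_A$ are nested and increase with $A$, and $\abs{B_{1/2}\setminus K_A}$ controls the distribution function of $\ThetaU$. The vertex map is $V_A(x)=x+A^{-1}Du(x)$, defined at points of twice differentiability in $K_A$. Its Jacobian is $\det(I+A^{-1}D^2u)$, with eigenvalues $1+\mu_i/A$, where $\mu_i\ge -A$ are the eigenvalues of $D^2u$.

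On the new layer $K_{A+\dd A}\setminus K_A$, I stratify points by $r=\indm(D^2u)\in\{0,1,2,3\}$ and by the mean negative curvature $m=tA$, $t\in[0,1]$. The Pucci inequality forces the positive eigenvalues to sum to at most $\kappa r tA$. Applying AM--GM to the $3-r$ positive factors gives the sharp determinant laws
\[
 J_1(t)=(1-t)\Bigl(1+\tfrac{\kappa t}{2}\Bigr)^{2},\qquad
 J_2(t)=(1-t)^2(1+2\kappa t),\qquad J_3(t)=(1-t)^3 .
\]
The stratum $r=0$ is convex and has Jacobian at least $1$, so it never obstructs. The area formula applied stratum by stratum then bounds the image measure by $\int J_r\,\dd x$. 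The key point is that the entry opening of each point, its index $r$, and its parameter $t$ are all retained, and the balance is integrated over all openings $A$ before any optimization is performed. Converting the image-measure balance into a differential inequality for $\phi(A)=\abs{B_{1/2}\setminus K_A}$ and integrating produces, for each $r$, a one-dimensional kernel in $t$. Its critical exponent $p_r(\kappa)$ is the largest power for which $\phi(A)\lesssim A^{-p}$ survives. The lower bound is then $\underline p_3=\min_r p_r$ (in practice the minimum over $r=1,2$, since $r=3$ is dominated).

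The final, explicit step is computing $p_1$ and $p_2$. For each $r$ this means solving the endpoint/critical-point equation of the kernel, which should be a quadratic in the relevant variable. I expect $p_1(\kappa)=3/(2\kappa+1)$, matching the radial profile. I expect $p_2(\kappa)$ to be the root $\bigl(\kappa+10-\sqrt{\kappa^2+20\kappa+4}\bigr)/\bigl(2(\kappa+2)\bigr)$. As a check, the two coincide at $\kappa=4$: there $\sqrt{100}=10$, giving $4/12=1/3=3/9$. Finally, I would verify the ordering: $p_2\ge p_1$ on $[1,4]$ and the reverse for $\kappa>4$. The main obstacle is the step before this computation. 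One must justify that integrating over openings with the stratified data genuinely decouples the strata, and in particular handle layers where several indices coexist and the degenerate endpoints $t\to1$ without losing a power of $A$. I would attack this by writing the layer identity as an exact Stieltjes integral in $A$, bounding each stratum's contribution by its own kernel, and only at the end taking the worst stratum.
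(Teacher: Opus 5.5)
Your architecture matches the paper's. You stratify new contacts by the actual negative index and mean negative curvature, bound the Jacobian by AM--GM, integrate over openings before optimizing, take the worst stratum, and quote the radial example for the upper bound; your $J_1,J_2$ are exactly $P_{3,1,\kappa},P_{3,2,\kappa}$. The gap is the step that actually produces the exponents, which you leave as ``converting the image-measure balance into a differential inequality''; the version you sketch would not do it. Paraboloids of opening $-A$ with vertices in a fixed ball, with vertex map $x+A^{-1}Du$, give an ABP-type bound on the whole contact set $K_A$ at one opening. They give no inequality between the noncontact set at one opening and the points entering afterwards, and turning them into decay needs the covering iteration that loses the sharp exponent. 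Infinitesimal layers $K_{A+\dd A}\setminus K_A$ fail as well. The vertex map carrying the noncontact set at opening $a$ to new contacts at opening $b$ has Jacobian $\det\bigl((bI+D^2u)/(b-a)\bigr)$, whose prefactor $(b/(b-a))^n$ blows up as $b\downarrow a$. The missing ingredient is the shell inequality. Apply Mooney's expansion $\abs{V_F}\ge\abs{F}$ to the convex envelope of $U_a=u+\frac{a}{2}\abs{x}^2$, using paraboloids of opening $-(\rho-1)a$. Use the separation lemma and a localization making $\Omega\setminus A_a$ compactly contained, so that first contacts are interior and genuinely new. This gives
\[
 \abs{\{\tau>a\}}\le\Bigl(\frac{\rho}{\rho-1}\Bigr)^3\sum_{r}\int_{\{a<\tau\le\rho a\}\cap\Sigma_r}P_{3,r,\kappa}\Bigl(\frac{s}{\rho a}\Bigr)\dd x .
\]
Integrating against $pa^{p-1}\dd a$ and using Tonelli gives each point, with $y=s/\tau$, the coefficient $1+(\Psi(y)-1)\rho^{-p}+O(\rho^{-1})$, where $\Psi$ is the aperture potential. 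This is $<1$ for $\rho$ \emph{large} exactly when $\sup_{[0,1]}\Psi<1$, i.e.\ when $\int_0^1z^{-p}P'_{3,r,\kappa}(z)\dd z<0$. The truncated last shell needs a separate monotonicity (terminal cutoff) lemma, the estimate closes by layer-cake absorption, and inf-convolution passes to viscosity supersolutions. Without this, your ``endpoint/critical-point equation'' is undetermined; if it means maximizing the kernel in $t$, it is exactly the pointwise optimization that loses sharpness.

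Second, the explicit computation is only conjectured, and you attach the formulas to the wrong strata. The value $3/(2\kappa+1)$ comes from $r=2$, via the beta-function recurrence applied to $(1-z)^2(1+2\kappa z)$. This matches the radial profile, whose Hessian has two negative tangential eigenvalues in $\mathbb R^3$. The other root comes from $r=1$: writing $(1-z)(1+\frac{\kappa}{2}z)^2=1+(\kappa-1)z+(\frac{\kappa^2}{4}-\kappa)z^2-\frac{\kappa^2}{4}z^3$, the condition $p\sum_j a_j/(j-p)=1$ becomes $(\kappa+2)^2p^2-(\kappa+2)(\kappa+10)p+24=0$. You must also discard the larger root, since the quadratic equals $8(1-\kappa)<0$ at $p=1$. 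You must then prove the ordering on each side of $\kappa=4$. Evaluating the quadratic at $3/(2\kappa+1)$ gives $-6\kappa(\kappa-1)(\kappa-4)/(2\kappa+1)^2$, which settles it; coincidence at $\kappa=4$ alone does not. Finally, $r=0$ is harmless not because its Jacobian is at least $1$, since the area formula needs upper bounds. It is harmless because the Pucci inequality forces $D^2u=0$ there, so the kernel is identically $1$.
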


The upper bounds in \Cref{thm:planar-intro,thm:three-dimensional-intro} are the examples from \cite{ArmstrongSilvestreSmart,NascimentoTeixeiraJMPA}.  The present paper supplies the matching lower theory: complete in the plane and sharp in dimension three over a substantial ellipticity range.  The value $4$ is intrinsic to the three-dimensional Pucci cone and not an artifact of the optimization; see \Cref{rem:kappa-four}.

\subsection{The general mechanism}

The proof extends, without changing its principle, to every dimension.  For $1\le r\le n-1$, define
\begin{equation}\label{eq:index-kernel-intro}
 P_{n,r,\kappa}(z)
 :=(1-z)^r\left(1+\frac{\kappa r}{n-r}z\right)^{n-r},
 \qquad 0\le z\le1.
\end{equation}
If $\kappa>1$, let $p_{n,r}(\kappa)\in(0,1)$ be the unique solution of
\begin{equation}\label{eq:critical-equation-intro}
  \int_0^1 z^{-p_{n,r}(\kappa)}P'_{n,r,\kappa}(z)\dd z=0.
\end{equation}
For $\kappa=1$, set $p_{n,r}(1)=1$, and put
\begin{equation}\label{eq:underline-p-intro}
  \underline p_n(\kappa):=\min_{1\le r\le n-1}p_{n,r}(\kappa).
\end{equation}
For each fixed $n$, these exponents are determined by an explicit
algebraic equation.  Indeed, if
\[
  P_{n,r,\kappa}(z)=1+\sum_{j=1}^n a_jz^j,
\]
then \eqref{eq:critical-equation-intro} is equivalent to
\begin{equation}\label{eq:algebraic-critical-intro}
  p\sum_{j=1}^n\frac{a_j}{j-p}=1.
\end{equation}

\begin{theorem}[General lower estimate]\label{thm:general-lower-intro}
Let $n\ge2$, $\kappa=\Lambda/\lambda\ge1$, and let $u\in C(\overline B_1)$ satisfy \eqref{eq:Pucci-supersolution-intro}.  For every
\[
  0<p<\underline p_n(\kappa)
\]
there exists $C=C(n,\kappa,p)$ such that
\begin{equation}\label{eq:main-general-estimate-intro}
 \int_{B_{1/2}}\ThetaU(u,B_1)^p\dd x
 \le C\norm{u}_{L^\infty(B_1)}^p.
\end{equation}
In particular,
\[
  \eps_n(\kappa)\ge\underline p_n(\kappa).
\]
\end{theorem}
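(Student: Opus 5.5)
The plan is to follow the sliding-paraboloid scheme of Mooney \cite{Mooney}, run as an evolution in the opening, and to keep the spectral data until the very end. Normalize $\norm{u}_{L^\infty(B_1)}\le1$. For each $a>0$ let $A_a\subset B_{1/2}$ denote the set of points touched from below, throughout $B_1$, by a paraboloid of opening $-a$; these sets increase in $a$. The vertex map $x\mapsto y=x+a^{-1}\nabla u(x)$ sends $A_a$ onto a set whose measure is controlled from below. The standard covering lemma of \cite{Mooney} (or the measure estimate behind it) gives
\[
\abs{\{y\text{ reached at opening }\le a\}}\ge c\,\abs{B_{1/2}},
\]
uniformly for $a\ge a_0$. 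The target is a decay estimate $\abs{B_{1/2}\setminus A_a}\lesssim a^{-p}$ for every $p<\underline p_n(\kappa)$. Integrating this distribution bound gives \eqref{eq:main-general-estimate-intro}.

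Next I would compute the Jacobian of the vertex map as a function of the opening. At a point $x$ of twice differentiability, with $D^2u(x)=M$ and $x$ first touched at opening $s$, we have $D_xy=I+a^{-1}M$ for $a\ge s$. Write the eigenvalues of $M$ as $-\nu_1,\dots,-\nu_r$ (negative part, with $\nu_i\le s$) and $\mu_j\ge0$. The Pucci inequality gives $\sum\mu_j\le\kappa\sum\nu_i$. By concavity (AM--GM),
\[
\det(I+M/a)\le(1-\bar\nu/a)^r\Bigl(1+\tfrac{\kappa r}{n-r}\,\bar\nu/a\Bigr)^{n-r},
\]
where $\bar\nu$ is the mean negative curvature. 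Setting $z=\bar\nu/a\in[0,s/a]$, the right side is exactly $P_{n,r,\kappa}(z)$. I would stratify each new contact layer $\{x: \text{entry opening}\in[s,s+ds],\ \indm=r\}$ by $r$. The area formula then bounds the measure of the vertex image at opening $a$ by $\sum_r\int P_{n,r,\kappa}(\bar\nu(x)/a)\,dx$ over $A_a$, with $r=0$ contributing at most the Lebesgue mass, since the determinant is then bounded by $(1+0)$.

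The key step is to integrate over all openings before optimizing. Write $m(a)=\abs{B_{1/2}\setminus A_a}$ and let $dm$ be the entry distribution. Comparing the growth of the image measure (which is at least $c$, and which under the scaling $y$-sets shrink like $a^{-n}$) with the Jacobian bound gives an integral inequality of the form
\[
\int_{s\le a} P_{n,r,\kappa}(\theta/a)\,(-dm(s))\ \ge\ \text{(mass lower bound)},
\]
where $\theta\le s$ is the worst admissible mean curvature. Testing this against the weight $a^{p-1}\,da$ and exchanging the order of integration produces, stratum by stratum, the constant $\int_0^1 z^{-p}P'_{n,r,\kappa}(z)\,dz$. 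The sign of this constant is positive exactly when $p<p_{n,r}(\kappa)$, by \eqref{eq:critical-equation-intro} and monotonicity in $p$. Taking $p<\underline p_n(\kappa)$ makes every stratum favorable simultaneously, which closes a Gr\"onwall-type inequality for $\int a^{p-1}m(a)\,da$ restricted to the scales $a\ge a_0$. The equivalence with the algebraic form \eqref{eq:algebraic-critical-intro} is a termwise integration.

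The main obstacle I expect is making this step rigorous. First, $\bar\nu$ ranges over $[0,s]$ pointwise, and the worst case must be taken inside the integral over openings without reintroducing the pointwise loss. The plan is to use that $z\mapsto P$ changes monotonicity only once, so the extremal configuration sits at an endpoint, $\bar\nu=s$. Second, the scaling structure has to be arranged so that a truncation at finite $a$ produces only bounded error terms. I would handle this by working with the truncated functional $\int_{a_0}^{R}a^{p-1}m\,da$, proving a bound uniform in $R$, and letting $R\to\infty$.
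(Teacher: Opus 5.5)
Your overall design agrees with the paper's: the AM--GM bound on the two spectral groups giving $P_{n,r,\kappa}$, stratification by the actual negative index, and integration against $pa^{p-1}$ before optimizing. The gap is that the inequality supposed to produce decay is missing, and the one you sketch cannot produce it.

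The vertex map $x\mapsto x+a^{-1}\nabla u(x)$, combined with a fixed lower bound on the vertex set, gives only $c\abs{B_{1/2}}\le\int_{\{\tau\le a\}}P_{n,r,\kappa}(s/a)\,\dd x$. Here, in the paper's notation, $\tau$ is the entry opening and $s$ the mean negative curvature (your $\bar\nu$). This is a basic measure estimate. Its left side is a constant, and $P(s/a)\to P(0)=1$ as $a\to\infty$, so it only says that $\abs{\{\tau\le a\}}$ stays bounded below. Nothing in it forces $m(a)=\abs{\{\tau>a\}}$ to decay. Testing it against $a^{p-1}\,\dd a$ over all $a\ge\tau$ gives terms growing like $R^p$ on both sides, not a contraction.

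What you need is an inequality with $m(a)$ itself on the left and only the \emph{new} contacts on the right. The paper obtains it in \Cref{prop:shell-inequality}:
take compact $F$ in the noncontact set at opening $a$, and pass to $U_a=u+\frac a2\abs{x}^2$ and its convex envelope $w_a$; take paraboloids of opening $-(\rho-1)a$ tangent to $w_a$ on $F$, whose vertex set has measure at least $\abs{F}$ by \Cref{lem:vertex-expansion}; slide them up to $U_a$; by \Cref{lem:separation} and the localization of \Cref{lem:localization}, the first contacts are interior and lie in the shell $\{a<\tau\le\rho a\}$.
The vertex map is then $\Phi(x)=(\rho a\,x+\nabla u(x))/((\rho-1)a)$, with Jacobian $(\rho aI+D^2u)/((\rho-1)a)$. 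This yields $m(a)\le\sum_r\int_{\{a<\tau\le\rho a\}\cap\Sigma_r}(\rho/(\rho-1))^nP_{n,r,\kappa}(s/(\rho a))\,\dd x$, which is the recursive relation your Gr\"onwall step requires.

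Your treatment of the free ratio $y=s/\tau\in[0,1]$ is also incorrect. After integration in the opening, the profile that appears is the aperture potential $\Psi(y)=py^p\int_0^yz^{-p-1}(P(z)-1)\,\dd z$, which satisfies $y\Psi'=p(\Psi+P-1)$. Since $\Psi(1)=1+\int_0^1z^{-p}P'(z)\,\dd z<1=1-P(1)$, one has $\Psi'(1)<0$. So the worst configuration is interior, not at the endpoint $y=1$ you proposed.

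The paper instead proves $\sup_{[0,1]}\Psi<1$, using that at an interior maximum $\Psi=1-P<1$. It then takes $\rho$ large so that the full-shell coefficient $1+(\Psi-1)\rho^{-p}+O(\rho^{-1})$ is at most some $\theta<1$. The truncated last shell is handled by the monotonicity in \Cref{lem:upper-cutoff}.

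Two smaller points. First, $\int_0^1z^{-p}P'\,\dd z$ is \emph{negative}, not positive, exactly when $p<p_{n,r}(\kappa)$; it equals $-1$ at $p=0$. Second, $\nabla u$, $D^2u$ and the area formula are not available for a merely continuous supersolution. You need the inf-convolution reduction of \Cref{lem:inf-convolution}, including the lower semicontinuity of $\ThetaU$, and the localization so that no first contact falls on $\partial\Omega$.
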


The integer $r$ in \eqref{eq:index-kernel-intro} is the actual number of negative Hessian eigenvalues at the contact point.  Each value of $r$ produces one explicit kernel, and the smallest critical exponent governs the universal estimate.  When $r=n-1$, the critical value is exactly
\[
  p_{n,n-1}(\kappa)=\frac{n}{(n-1)\kappa+1},
\]
the radial-profile scale in \eqref{eq:radial-upper-intro}.
The range in which the radial branch governs the kernel family is
determined in \Cref{prop:radial-kernel-threshold}.
The final section also records the fixed-dimensional high-contrast
consequence.
It improves the leading constant in the lower bound of \cite{NascimentoTeixeiraJMPA}, while leaving the power $\kappa^{-(n-1)}$ unchanged.

\subsection{Organization of the paper}

\Cref{sec:contact-geometry} develops the contact geometry and the two compactness devices used in the proof: localization and inf-convolution.  In \Cref{sec:spectral-jacobian} we derive the exact contact Jacobian and separate the new contacts according to the number of negative eigenvalues.  The continuum integration in the opening, including the terminal cutoff, is carried out in \Cref{sec:continuum}.  The sharp planar theorem and the three-dimensional calculation are proved in \Cref{sec:low-dimensions}.  We conclude in \Cref{sec:higher-dimensions} with index comparisons and high-contrast asymptotics that connect the general kernels to the earlier higher-dimensional theory.

\section{Contact geometry, localization, and stability}\label{sec:contact-geometry}

\subsection{Paraboloid contact sets}

Let $\Omega\subset\Rn$ be bounded and strictly convex.  For $a\ge0$ and $u\in C(\overline\Omega)$, define the $a$-convex envelope
\begin{equation}\label{eq:a-envelope}
 \Gamma_u^a(x):=
 \sup\left\{-\frac a2\abs{x}^2+L(x):
 -\frac a2\abs{y}^2+L(y)\le u(y)\text{ in }\overline\Omega,
 \ L\text{ affine}\right\}.
\end{equation}
The corresponding contact set is
\begin{equation}\label{eq:contact-set}
 A_a(u,\Omega):=\{x\in\overline\Omega:u(x)=\Gamma_u^a(x)\}.
\end{equation}
We omit $\Omega$ when the domain is fixed.  Strict convexity implies that every boundary point belongs to $A_a(u,\Omega)$.  At an interior point, membership in $A_a(u,\Omega)$ is equivalent to the existence of a paraboloid of opening $-a$ touching $u$ from below throughout $\Omega$.  The sets are nested:
\[
 A_a(u,\Omega)\subset A_b(u,\Omega)
 \qquad(0\le a\le b).
\]
Moreover,
\begin{equation}\label{eq:Theta-definition}
 \ThetaU(u,\Omega)(x)=\inf\{a>0:x\in A_a(u,\Omega)\},
\end{equation}
with the value $+\infty$ if the set is empty.

We shall use repeatedly the elementary transformation
\begin{equation}\label{eq:envelope-transform}
 \Gamma_{\beta u+\frac\gamma2\abs{x}^2}^{a}
 =\beta\Gamma_u^{(a+\gamma)/\beta}+\frac\gamma2\abs{x}^2,
 \qquad a\ge0,\qquad \beta>0,\qquad a+\gamma\ge0.
\end{equation}

\begin{lemma}[Separation of new contacts]\label{lem:separation}
Let $v\in C(\overline\Omega)$ satisfy
\[
 \Mminus(D^2v)\le K<\infty
 \qquad\text{in }\Omega.
\]
Suppose that a paraboloid $Q$ of opening $-\delta<0$ touches
$\Gamma_v^0$ from below at
\[
 x_0\in\Omega\setminus A_0(v,\Omega).
\]
Slide $Q$ upward until $Q+t$, with $t>0$, first touches $v$.  If a
first-contact point $x_1$ lies in $\Omega$, then
\[
 x_1\notin A_0(v,\Omega).
\]
\end{lemma}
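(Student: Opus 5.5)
The plan is to argue by contradiction. Suppose $x_1\in\Omega\cap A_0(v,\Omega)$, so that $v(x_1)=\Gamma_v^0(x_1)$. Since $x_1$ is interior and $\Gamma_v^0$ is convex and finite on $\overline\Omega$, the subdifferential $\partial\Gamma_v^0(x_1)$ is nonempty. Pick $\xi\in\partial\Gamma_v^0(x_1)$ and let $\ell(x)=\Gamma_v^0(x_1)+\xi\cdot(x-x_1)$. Then $\ell\le\Gamma_v^0\le v$ in $\overline\Omega$ with equality at $x_1$. By construction of the sliding, $Q+t\le v$ in $\overline\Omega$ with equality at $x_1$ as well. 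So $v$ is touched from below at $x_1$ by two smooth functions: the affine function $\ell$ and the concave paraboloid $Q+t$. The argument then splits according to whether their gradients at $x_1$ agree.

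\emph{Case $\nabla Q(x_1)=\xi$.} Because $Q+t$ is concave and shares the value $v(x_1)=\ell(x_1)$ and the gradient $\xi$ with $\ell$ at $x_1$, it lies below its tangent plane there: $Q+t\le\ell$ everywhere. Hence $Q+t\le\ell\le\Gamma_v^0$. Evaluating at $x_0$, where $Q(x_0)=\Gamma_v^0(x_0)$ because $Q$ touches $\Gamma_v^0$ there, gives $Q(x_0)+t\le Q(x_0)$. This contradicts $t>0$. Notice that this case uses neither the equation nor the hypothesis $x_0\notin A_0$ beyond $t>0$.

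\emph{Case $\nabla Q(x_1)\neq\xi$.} Here the equation enters. Set $e=(\nabla Q(x_1)-\xi)/\abs{\nabla Q(x_1)-\xi}$ and $\eta=\abs{\nabla Q(x_1)-\xi}>0$. Then
\[
v\ge\max(\ell,Q+t)\ge v(x_1)+\tfrac12(\xi+\nabla Q(x_1))\cdot(x-x_1)+\tfrac\eta2\abs{e\cdot(x-x_1)}-\tfrac\delta2\abs{x-x_1}^2,
\]
using $Q+t=v(x_1)+\nabla Q(x_1)\cdot(x-x_1)-\frac\delta2\abs{x-x_1}^2$ and $\ell\ge\ell-\frac\delta2\abs{x-x_1}^2$. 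Since $\tfrac\eta2\abs{s}\ge \tfrac M2 s^2$ for $\abs{s}\le\eta/M$, for any large $M$ the paraboloid
\[
\varphi(x)=v(x_1)+\tfrac12(\xi+\nabla Q(x_1))\cdot(x-x_1)+\tfrac M2\bigl(e\cdot(x-x_1)\bigr)^2-\tfrac\delta2\abs{x-x_1}^2
\]
touches $v$ from below at $x_1$ in a small neighborhood. Its Hessian $Me\otimes e-\delta I$ has one eigenvalue $M-\delta$ and $n-1$ eigenvalues $-\delta$. Therefore $\Mminus(D^2\varphi)=\lambda(M-\delta)-\Lambda(n-1)\delta$, which exceeds $K$ once $M$ is large. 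This contradicts the viscosity supersolution property $\Mminus(D^2v)\le K$ at the interior point $x_1$.

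The main obstacle is the second case: I must check carefully that the quadratic $\varphi$ really stays below $\max(\ell,Q+t)$ on a full neighborhood. This is the elementary inequality $\max(a\cdot h,b\cdot h)=\frac{a+b}2\cdot h+\frac{\abs{a-b}}2\abs{e\cdot h}$, followed by restricting to $\abs{h}\le\eta/M$. I also need the finiteness of $K$ to make the blow-up of $M$ decisive. The first case is purely convex-geometric and carries the role of the strict concavity of $Q$ together with $t>0$.
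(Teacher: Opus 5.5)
Your proof is correct and follows the paper's route: the same tangential/transversal case split, the same evaluation at $x_0$ in the tangential case, and in the transversal case a test function touching from below whose Hessian has one arbitrarily large positive eigenvalue and $n-1$ fixed negative ones, which contradicts the finite bound $K$. The only difference is in how that test function is built. You take the quadratic $\varphi$ directly beneath the ridge $\max\bigl(\ell-\tfrac{\delta}{2}\abs{x-x_1}^2,\,Q+t\bigr)$, whereas the paper first normalizes to $\max\{0,1-\abs{x}^2\}$ and then uses the radial function $\phi_A$.
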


\begin{proof}
This is the interior assertion of \cite[Lemma~3.1]{Mooney}.  Suppose, to
the contrary, that $x_1\in A_0(v,\Omega)$, and let $L$ be an affine
support of $\Gamma_v^0$ at $x_1$.  Thus
\[
 L\le\Gamma_v^0\le v
 \quad\text{in }\overline\Omega,
 \qquad
 L(x_1)=\Gamma_v^0(x_1)=v(x_1).
\]
If $Q+t$ is tangent to $L$ at $x_1$, then the strict concavity of
$Q+t-L$ gives $Q+t\le L$ everywhere.  Evaluating at $x_0$, where
$Q(x_0)=\Gamma_v^0(x_0)$, yields
\[
 Q(x_0)+t\le L(x_0)\le\Gamma_v^0(x_0)=Q(x_0),
\]
contrary to $t>0$.

Hence $Q+t$ and $L$ meet transversally at $x_1$.  After subtracting
$L$, translating and rotating the coordinates, and applying fixed
positive rescalings in the independent and dependent variables, we may
assume that
\[
 \max\{0,1-\abs{x}^2\}
\]
touches the normalized function from below at $e_1$.  For every $A>0$,
the function
\[
 \phi_A(x):=1-\abs{x}+A(\abs{x}-1)^2
\]
then touches it from below in a neighborhood of $e_1$.  At that point,
$D^2\phi_A$ has radial eigenvalue $2A$ and $n-1$ tangential eigenvalues
equal to $-1$.  Consequently,
\[
 \Mminus(D^2\phi_A(e_1))
 =2\lambda A-(n-1)\Lambda.
\]
The preceding normalization changes the finite right-hand side of the
viscosity inequality only by a fixed factor.  Letting $A\to\infty$
therefore gives a contradiction.
\end{proof}

\begin{lemma}[Expansion of transformed vertices]\label{lem:vertex-expansion}
Let $w$ be convex in $\Omega$, let $\delta>0$, and let $F\subset\Omega$ be compact.  Denote by $V_F$ the set of vertices of all paraboloids of opening $-\delta$ tangent from below to $w$ at points of $F$.  Then
\begin{equation}\label{eq:vertex-expansion}
 \abs{V_F}\ge\abs{F}.
\end{equation}
\end{lemma}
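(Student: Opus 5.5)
The plan is to pass to the vertex map of $w+\frac\delta2\abs{x}^2$. A paraboloid $P(x)=-\frac\delta2\abs{x-y}^2+c$ with vertex $y$ is tangent from below to $w$ at $x$ exactly when the affine function
\[
  \ell(z)=\delta y\cdot z+c-\tfrac\delta2\abs{y}^2
\]
supports the convex function $\tilde w:=w+\frac\delta2\abs{x}^2$ at $x$. Equivalently, $\delta y\in\partial\tilde w(x)$. Hence
\[
  V_F=\delta^{-1}\,\partial\tilde w(F),
  \qquad
  \partial\tilde w(x)=\partial w(x)+\delta x .
\]
The claim \eqref{eq:vertex-expansion} therefore reduces to
\[
  \abs{\partial\tilde w(F)}\ge\delta^n\abs{F}.
\]
This is a lower bound for the Monge--Ampère measure of a function whose convexity is at least $\delta$.

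First, I will treat the case of smooth $w$. If $w\in C^2$, the map is $T(x)=x+\delta^{-1}Dw(x)$, with
\[
  DT=I+\delta^{-1}D^2w\ge I .
\]
Since $\tilde w$ is uniformly convex, $D\tilde w$ is injective. By the area formula,
\[
  \abs{T(F)}=\int_F\det\bigl(I+\delta^{-1}D^2w\bigr)\ge\abs F .
\]

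For general convex $w$, I want to avoid regularity issues by using the monotone structure directly. The key observation is that the multivalued map $T=\mathrm{Id}+\delta^{-1}\partial w$ has an inverse that is a contraction. Indeed, if $y_i\in T(x_i)$ for $i=1,2$, then monotonicity of $\partial w$ gives
\[
  (y_1-y_2)\cdot(x_1-x_2)\ge\abs{x_1-x_2}^2,
\]
so $\abs{x_1-x_2}\le\abs{y_1-y_2}$. In particular, $T$ is injective, and $S:=T^{-1}$ is a single-valued $1$-Lipschitz map on $T(F)=V_F$ with $S(V_F)=F$. This is the resolvent structure.

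The main point is compactness. Because $F$ is compact and $\partial w$ is locally bounded with closed graph in the interior of $\Omega$, the set $V_F$ is compact, hence measurable. A $1$-Lipschitz map does not increase Lebesgue outer measure; this follows from the fact that it does not increase $\mathcal H^n$. Therefore
\[
  \abs F=\abs{S(V_F)}\le\abs{V_F},
\]
which is the desired inequality.

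The only delicate step is justifying the dictionary between tangent paraboloids and $\partial\tilde w$ when $F$ touches $\partial\Omega$. Since $F\subset\Omega$ is compact, however, local subgradients suffice: tangency "from below" means local support at the contact point, and for a convex function local support implies a subgradient. If instead the lemma means tangency throughout $\Omega$, then the set of vertices is only smaller. In that case I would argue that $w$ convex on $\Omega$ makes local and global support coincide on convex $\Omega$. I expect this bookkeeping, rather than the measure estimate, to be the main thing to check.
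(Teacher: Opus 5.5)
Your proof is correct, but its decisive step differs from the paper's. Both arguments start from the same dictionary, $V_F=\{x+\delta^{-1}p:\ x\in F,\ p\in\partial w(x)\}=\partial h(F)$ with $h=\frac12\abs{x}^2+\delta^{-1}w$. This is your $\delta^{-1}\partial\tilde w(F)$.

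The paper then finishes in one line using Monge--Amp\`ere theory: $\abs{\partial h(F)}\ge\int_F\det D^2h\,dx$, where $D^2h\ge I$ is the Alexandrov Hessian. That line relies on Alexandrov's theorem and on the fact that the absolutely continuous part of the Monge--Amp\`ere measure of a convex function has density $\det D^2h$.

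You use only the monotonicity of $\partial w$. Monotonicity makes the multivalued map $x\mapsto x+\delta^{-1}\partial w(x)$ have a single-valued $1$-Lipschitz inverse from $V_F$ onto $F$; this is the nonexpansive resolvent of $\delta^{-1}\partial w$. A $1$-Lipschitz map cannot increase Lebesgue outer measure, which gives the bound.

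Each route has an advantage:
\begin{itemize}
\item Yours is more elementary and needs no second-order differentiability. In particular, your separate $C^2$ case is superfluous once the resolvent argument is in place.
\item The paper's gives the stronger Jacobian bound $\abs{V_F}\ge\int_F\det(I+\delta^{-1}D^2w)\,dx$. It also matches the determinant and area-formula computations used later in the shell inequality, though only the constant-one bound is needed for this lemma.
\end{itemize}

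Your handling of local versus global tangency is also fine. Since $w-P$ is convex on the convex set $\Omega$, a local minimum at the contact point is global, so the two readings of ``tangent from below'' produce the same vertex set.
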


\begin{proof}
A paraboloid of opening $-\delta$ tangent to $w$ at $x$ has vertex
\[
 x+\frac1\delta p,
 \qquad p\in\partial w(x).
\]
Thus $V_F=\partial h(F)$ for
\[
 h(x)=\frac12\abs{x}^2+\frac1\delta w(x).
\]
The function $h$ is convex and its Alexandrov Hessian satisfies $D^2h\ge I$.  Its Monge--Amp\`ere measure therefore gives
\[
 \abs{V_F}=\abs{\partial h(F)}
 \ge\int_F\det D^2h\dd x
 \ge\abs{F}.
\]
This is the argument of \cite[Lemma~3.2]{Mooney}.
\end{proof}

\subsection{Localization}

The iteration is cleanest when all noncontact sets are compactly contained in the region where the differential inequality is available.

\begin{lemma}[Localization by a concave cap]\label{lem:localization}
There are numerical constants $0<\eta<1$, $R>2$, and $a_0>1$ with the following property.  If $u\in C(\overline B_1)$ satisfies \eqref{eq:Pucci-supersolution-intro} and $\norm{u}_{L^\infty(B_1)}>0$, then there are a positive affine rescaling $v=c_0u+c_1$ and a continuous supersolution $\widetilde v$ in $B_R$ such that
\begin{enumerate}
\item $1\le v\le1+\eta$ in $B_1$;
\item $\widetilde v=v$ in $B_{3/4}$ and $\widetilde v\le v$ in $B_1$;
\item $B_R\setminus A_a(\widetilde v,B_R)\Subset B_1$ for every $a\ge a_0$;
\item
\[
 \ThetaU(v,B_1)(x)\le\ThetaU(\widetilde v,B_R)(x)
 \qquad(x\in B_{1/2}).
\]
\end{enumerate}
The constants are independent of $u$, $\lambda$, and $\Lambda$.
\end{lemma}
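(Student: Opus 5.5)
The plan is to normalize $u$ by an affine map, then cap it with a fixed concave paraboloid that lies above $v$ on $B_{3/4}$ and below $v$ near $\partial B_1$. Taking the minimum gives a supersolution that is a pure concave paraboloid outside a compact subset of $B_1$.

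\emph{Normalization.} I fix $\eta=\tfrac12$ and, with $m:=\inf_{B_1}u$ and $\omega:=\operatorname{osc}_{B_1}u$, set
\[
 v:=1+\eta\,\frac{u-m}{\omega}\quad(\omega>0),
 \qquad v:=1\quad(\omega=0).
\]
Then $1\le v\le1+\eta$, and $v$ still satisfies \eqref{eq:Pucci-supersolution-intro} because the rescaling is positive affine and $\Mminus$ is positively homogeneous. This gives (1).

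\emph{The cap.} Let
\[
 \varphi(x):=1+\eta+\frac A2\Bigl(\frac9{16}-\abs{x}^2\Bigr).
\]
Then $D^2\varphi=-AI$, so $\Mminus(D^2\varphi)=-n\Lambda A\le0$ for every $\lambda,\Lambda$. Moreover $\varphi\ge1+\eta\ge v$ on $B_{3/4}$. At $\abs{x}=7/8$ we have $\varphi=1+\eta-\tfrac{13}{128}A<1\le v$ once $A>\tfrac{128}{13}\eta$; I fix such a numerical $A$. Define
\[
 \widetilde v:=\min\{v,\varphi\}\ \text{in }B_1,
 \qquad
 \widetilde v:=\varphi\ \text{in }B_R\setminus B_1,
\]
with, say, $R=3$. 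Since $\varphi<v$ on $B_1\setminus B_{7/8}$, the function $\widetilde v$ coincides with $\varphi$ on $B_R\setminus \overline B_{7/8}$. Hence $\widetilde v$ is continuous, and it is a viscosity supersolution in $B_R$: near $\partial B_1$ it equals $\varphi$, and in $B_1$ it is the minimum of two supersolutions, which is a supersolution by the standard viscosity argument. This gives (2), since $\widetilde v=v$ on $B_{3/4}$ and $\widetilde v\le v$ on $B_1$.

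\emph{Item (4).} Fix $x\in B_{1/2}$. If a paraboloid $P$ of opening $-a$ satisfies $P\le\widetilde v$ in $B_R$ and $P(x)=\widetilde v(x)$, then $P\le\widetilde v\le v$ in $B_1$ and $P(x)=v(x)$. So $P$ is admissible for $\ThetaU(v,B_1)(x)$, and taking the infimum over $a$ gives (4).

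\emph{Item (3), the only quantitative step.} I show that every $x\in\overline B_R\setminus B_{15/16}$ lies in $A_a(\widetilde v,B_R)$ when $a\ge a_0$; then the noncontact set is contained in $\overline B_{15/16}\Subset B_1$. Points of $\partial B_R$ are contact points by strict convexity. For an interior $x$ with $\abs{x}\ge15/16$, let $\ell_x$ be the tangent affine function of $\varphi$ at $x$ and set
\[
 P(y):=\ell_x(y)-\frac A2\abs{y-x}^2-\frac{a-A}2\abs{y-x}^2
 =\varphi(y)-\frac{a-A}2\abs{y-x}^2 .
\]
This paraboloid has opening $-a$, satisfies $P\le\varphi$, and $P(x)=\varphi(x)=\widetilde v(x)$. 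It remains to check $P\le\widetilde v$ where $\widetilde v=v<\varphi$, which can only happen in $B_{7/8}$. There $\abs{y-x}\ge1/16$, so
\[
 P(y)\le1+\eta+\tfrac{9}{32}A-\tfrac{a-A}{512}\le1\le v(y)
\]
as soon as $a\ge a_0:=A+512(\eta+\tfrac9{32}A)$. All constants are numerical and independent of $u,\lambda,\Lambda$.

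The main care point is this last verification: the paraboloid must be tangent to $\widetilde v$ at $x$ yet dominated by $\widetilde v$ across the transition region. Choosing $P$ as $\varphi$ minus an extra quadratic centred at $x$ handles this cleanly.
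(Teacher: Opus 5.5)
Your proof is correct and follows the paper's argument. You normalize $u$ into $[1,1+\eta]$, cap it by a fixed concave quadratic that lies above $v$ on $B_{3/4}$ and below $1$ near $\partial B_1$, extend by the cap to $B_R$, and verify contact near $\partial B_1$ with the supporting paraboloids $\varphi(y)-\frac{a-A}{2}\abs{y-x}^2$; the paper does the same with $q(y)-(\frac{a_0}{2}-3)\abs{y-x}^2$. The differences are only in the constants: you use $\eta=\tfrac12$, the cap $\varphi$, and an oscillation-based normalization, where the paper uses $\eta=\tfrac1{10}$, $q=3(1-\abs{x}^2)$, and normalization by $\norm{u}_{L^\infty}$. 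Since $\operatorname{osc}u\le2\norm{u}_{L^\infty}$, your scaling factor satisfies $c_0\ge\eta/(2\norm{u}_{L^\infty})$, so the estimate still transfers back to $u$ in the right direction.
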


\begin{proof}
Take $\eta=1/10$, fix $R=3$, and put
\[
 v(x)=1+\eta\frac{u(x)+\norm{u}_{L^\infty(B_1)}}{2\norm{u}_{L^\infty(B_1)}}.
\]
Then $1\le v\le1+\eta$.  Let $q(x)=3(1-\abs{x}^2)$.  Since $q>1+\eta$ in $B_{3/4}$ and $q<1$ for $\abs{x}>\sqrt{2/3}$, define
\[
 \widetilde v(x)=
 \begin{cases}
  \min\{v(x),q(x)\},&x\in B_1,\\
  q(x),&x\in B_R\setminus B_1.
 \end{cases}
\]
Near $\partial B_1$ the minimum equals $q$, so the definition is continuous.  Both $v$ and $q$ are supersolutions, and the minimum of two viscosity supersolutions is a supersolution.  The first two assertions follow.

Set
\[
 K=\{x\in B_1:\widetilde v(x)<q(x)\}.
\]
Then $K\subset B_{\sqrt{2/3}}$ and $0\le q-\widetilde v\le2$ on $K$.  Choose $r_1\in(\sqrt{2/3},1)$ and then $a_0>6$ so that
\[
 \left(\frac{a_0}{2}-3\right)
 (r_1-\sqrt{2/3})^2\ge2.
\]
For $\abs{x}\ge r_1$, the paraboloid
\[
 P_x(y)=q(x)+\nabla q(x)\cdot(y-x)-\frac{a_0}{2}\abs{y-x}^2
       =q(y)-\left(\frac{a_0}{2}-3\right)\abs{y-x}^2
\]
lies below $\widetilde v$ in $B_R$ and agrees with it at $x$.  Hence
\[
 B_R\setminus A_{a_0}(\widetilde v,B_R)
 \subset B_{r_1}\Subset B_1,
\]
and the nesting of the contact sets proves the third assertion.

Finally, a paraboloid touching $\widetilde v$ from below at $x\in B_{1/2}$ lies below $v$ in $B_1$, and the two functions agree at $x$.  This gives the last assertion.
\end{proof}

\subsection{Inf-convolution stability of the contact opening}

The spectral analysis will first be carried out for semiconcave supersolutions.  The next lemma contains the entire passage back to continuous viscosity supersolutions.

\begin{lemma}[Inf-convolution stability]\label{lem:inf-convolution}
Let $u\in C(\overline B_1)$ satisfy
\eqref{eq:Pucci-supersolution-intro}, and assume that
\[
 M:=\norm{u}_{L^\infty(B_1)}>0.
\]
For $j\in \mathbb{N}$, define
\begin{equation}\label{eq:inf-convolution}
 u_j(x):=
 \inf_{y\in\overline B_1}
 \bigl\{u(y)+j\abs{x-y}^2\bigr\}.
\end{equation}
After discarding finitely many indices, set
\[
 \delta_j:=\left(\frac{2M}{j}\right)^{1/2},
 \qquad
 R_j:=1-\delta_j.
\]
Then $R_j\uparrow1$, and the following properties hold:
\begin{enumerate}
\item $u_j$ is semiconcave,
\[
 \norm{u_j}_{L^\infty(B_1)}\le M,
 \qquad
 u_j\longrightarrow u
 \quad\text{uniformly in }\overline B_1;
\]
\item $u_j$ is a viscosity supersolution of
\eqref{eq:Pucci-supersolution-intro} in $B_{R_j}$;
\item for every $x\in B_{1/2}$,
\begin{equation}\label{eq:Theta-lower-semicontinuity}
 \ThetaU(u,B_1)(x)
 \le
 \liminf_{j\to\infty}
 \ThetaU(u_j,B_{R_j})(x).
\end{equation}
\end{enumerate}
\end{lemma}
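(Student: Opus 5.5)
The plan is to verify the three assertions separately, using only the elementary structure of the inf-convolution and, for the third, a compactness argument on the touching paraboloids.

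For (1), write
\[
u_j(x)-j\abs{x}^2=\inf_{y\in\overline B_1}\bigl\{u(y)+j\abs{y}^2-2j\,x\cdot y\bigr\}.
\]
This is an infimum of affine functions of $x$, hence concave, so $u_j$ is semiconcave with constant $2j$. Taking $y=x$ gives $u_j\le u\le M$, and $u_j\ge\min u\ge-M$; thus $\norm{u_j}_{L^\infty}\le M$. The infimum is attained at some $y^*=y^*_j(x)$ by continuity and compactness. Comparing with $y=x$,
\[
j\abs{x-y^*}^2\le u(x)-u(y^*)\le 2M,
\qquad\text{so}\qquad \abs{x-y^*}\le\delta_j .
\]
Let $\omega$ be a modulus of continuity of $u$ on $\overline B_1$. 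Then $0\le u(x)-u_j(x)\le u(x)-u(y^*)\le\omega(\delta_j)\to0$, which gives uniform convergence. We discard the finitely many $j$ with $\delta_j\ge1/4$, so that $R_j>3/4$.

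For (2), I use the translation trick. Let $x_0\in B_{R_j}$, let $\varphi\in C^2$ touch $u_j$ from below at $x_0$, and let $y^*$ be a minimizer for $x_0$. Since $\abs{y^*}\le\abs{x_0}+\delta_j<1$, the point $y^*$ is interior. Set
\[
\psi(y):=\varphi\bigl(y+x_0-y^*\bigr)-j\abs{x_0-y^*}^2 .
\]
For $y$ near $y^*$ we have
\[
\psi(y)\le u_j\bigl(y+x_0-y^*\bigr)-j\abs{x_0-y^*}^2\le u(y),
\]
where the second inequality comes from testing the infimum defining $u_j(y+x_0-y^*)$ with the point $y$. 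Equality holds at $y=y^*$. The supersolution property of $u$ at $y^*$ then gives
\[
\Mminus(D^2\varphi(x_0))=\Mminus(D^2\psi(y^*))\le0 .
\]

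For (3), fix $x\in B_{1/2}$ and let $A:=\liminf_j\ThetaU(u_j,B_{R_j})(x)$. We may assume $A<\infty$, and pass to a subsequence along which $\ThetaU(u_j,B_{R_j})(x)\to A$. By \eqref{eq:Theta-definition} and the nesting of contact sets, choose $a_j\downarrow$-to-$A$ with $a_j\to A$ and $x\in A_{a_j}(u_j,B_{R_j})$. This gives paraboloids
\[
P_j(y)=u_j(x)+p_j\cdot(y-x)-\frac{a_j}{2}\abs{y-x}^2
\]
with $P_j\le u_j\le u$ in $B_{R_j}$ and $P_j(x)=u_j(x)$.

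The main obstacle is compactness of the slopes $p_j$. If $p_j\ne0$, evaluate at $y=x+\tfrac14 p_j/\abs{p_j}$, which lies in $B_{3/4}\subset B_{R_j}$. This yields
\[
-M+\tfrac14\abs{p_j}-\tfrac{a_j}{32}\le M ,
\]
so $\abs{p_j}$ is bounded. Extract $p_j\to p$, and use $u_j(x)\to u(x)$ from (1). The limit paraboloid $P$ has opening $-A$ and satisfies $P(x)=u(x)$. Moreover, for each $y\in B_1$ we have $y\in B_{R_j}$ for all large $j$, hence $P(y)=\lim P_j(y)\le u(y)$; by continuity this also holds on $\overline B_1$. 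Thus $x\in A_A(u,B_1)$ (with $A$ replaced by any $a>A$ if $A=0$), and therefore $\ThetaU(u,B_1)(x)\le A$, which is \eqref{eq:Theta-lower-semicontinuity}.
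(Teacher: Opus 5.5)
Your proof is correct and follows the paper's argument in all three parts. For (1) you use the affine-infimum representation and the displacement bound $\abs{x-y^*}\le\delta_j$. For (2) you transfer lower test functions to interior minimizers, writing out the translation argument where the paper simply cites \cite[Lemma~A.5]{CIL}. For (3) you use compactness of the touching paraboloids. The remaining differences are cosmetic: you bound the slopes by testing in the single direction $p_j/\abs{p_j}$ rather than at $x\pm he$, and you pass to the limit using $u_j\le u$ instead of the uniform convergence $u_j\to u$.
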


\begin{proof}
Let $y_j(x)\in\overline B_1$ be a minimizer in
\eqref{eq:inf-convolution}.  Comparing with the competitor $y=x$ gives
\[
 j\abs{x-y_j(x)}^2
 \le u(x)-u(y_j(x))
 \le 2M,
\]
and hence
\begin{equation}\label{eq:inf-convolution-displacement}
 \abs{x-y_j(x)}\le\delta_j.
\end{equation}
In particular, if $x\in B_{R_j}$, every minimizing point $y_j(x)$ lies
in $B_1$.  The standard inf-convolution lemma therefore implies that
$u_j$ is semiconcave and that lower viscosity jets of $u_j$ at points
of $B_{R_j}$ transfer to lower viscosity jets of $u$ at the
corresponding minimizing points.  Consequently, $u_j$ satisfies
\eqref{eq:Pucci-supersolution-intro} in $B_{R_j}$; see
\cite[Lemma~A.5]{CIL}.

For completeness, the semiconcavity is also immediate from
\[
 u_j(x)-j\abs{x}^2
 =
 \inf_{y\in\overline B_1}
 \bigl\{u(y)+j\abs{y}^2-2j x\cdot y\bigr\},
\]
since the right-hand side is an infimum of affine functions of $x$.
Moreover,
\[
 -M\le u_j\le u\le M.
\]
If $\omega_u$ is the modulus of continuity of $u$ on
$\overline B_1$, then \eqref{eq:inf-convolution-displacement} gives
\[
 u(x)-\omega_u(\delta_j)
 \le u_j(x)\le u(x)
 \qquad(x\in\overline B_1).
\]
Thus $u_j\to u$ uniformly in $\overline B_1$.

It remains to prove \eqref{eq:Theta-lower-semicontinuity}.  Fix
$x\in B_{1/2}$ and set
\[
 \ell:=
 \liminf_{j\to\infty}
 \ThetaU(u_j,B_{R_j})(x).
\]
There is nothing to prove if $\ell=+\infty$.  Otherwise, pass to a
subsequence, not relabeled, along which the lower limit is attained.
Choose a supporting paraboloid
\[
 P_j(y)
 =
 u_j(x)+\xi_j\cdot(y-x)
 -\frac{A_j}{2}\abs{y-x}^2
\]
for $u_j$ in $B_{R_j}$ such that
\[
 A_j\le
 \ThetaU(u_j,B_{R_j})(x)+\frac1j.
\]
The sequence $(A_j)$ is bounded.  Since $R_j\uparrow1$ and
$x\in B_{1/2}$, we may fix $h>0$ such that
\[
 x\pm he\in B_{R_j}
\]
for every unit vector $e$ and all sufficiently large $j$.  Evaluating
$P_j\le u_j$ at these two points and using
$\norm{u_j}_{L^\infty(B_1)}\le M$ shows that $(\xi_j)$ is bounded.
After passing to a further subsequence,
\[
 A_j\longrightarrow A,
 \qquad
 \xi_j\longrightarrow\xi,
 \qquad
 A=\ell.
\]
The paraboloids $P_j$ therefore converge locally uniformly to
\[
 P(y)
 =
 u(x)+\xi\cdot(y-x)
 -\frac A2\abs{y-x}^2.
\]

For every fixed $y\in B_1$, one has $y\in B_{R_j}$ for all sufficiently
large $j$.  Passing to the limit in
\[
 P_j(y)\le u_j(y)
\]
and using the uniform convergence of $u_j$ gives
\[
 P(y)\le u(y)
 \qquad(y\in B_1),
 \qquad
 P(x)=u(x).
\]
By continuity, the inequality extends to $\overline B_1$.  Thus $P$
supports $u$ from below at $x$, and consequently
\[
 \ThetaU(u,B_1)(x)\le A=\ell.
\]
This proves \eqref{eq:Theta-lower-semicontinuity}.
\end{proof}

\section{The contact Jacobian and spectral stratification}\label{sec:spectral-jacobian}

Let $\Omega\subset\Rn$ be bounded and strictly convex, and let
$u\in C(\overline\Omega)$ be semiconcave in $\Omega$.  Assume that
\begin{equation}\label{eq:Pucci-on-Omega}
 \Mminus(D^2u)\le0
 \qquad\text{in }\Omega,
\end{equation}
and that, for some $a_0>0$,
\begin{equation}\label{eq:compact-noncontact}
 \Omega\setminus A_a(u,\Omega)\Subset\Omega
 \qquad\text{for every }a\ge a_0.
\end{equation}

At every point $x$ where $u$ is twice differentiable in the sense of
Alexandrov, the viscosity inequality holds with the Alexandrov Hessian
$D^2u(x)$.  Define
\[
 r(x):=\indm(D^2u(x)).
\]
When $1\le r(x)\le n$, write the strictly negative eigenvalues of
$D^2u(x)$ as
\[
 -q_1(x),\ldots,-q_{r(x)}(x),
 \qquad q_i(x)>0,
\]
and its remaining eigenvalues as
\[
 \mu_1(x),\ldots,\mu_{n-r(x)}(x),
 \qquad \mu_j(x)\ge0.
\]
The mean negative curvature is
\begin{equation}\label{eq:mean-negative-curvature}
 s(x):=\frac{1}{r(x)}
 \sum_{i=1}^{r(x)}q_i(x)
 \qquad\text{if }r(x)\ge1.
\end{equation}
If $r(x)=0$, then \eqref{eq:Pucci-on-Omega} forces $D^2u(x)=0$, and we
set $s(x)=0$.  The Alexandrov Hessian is measurable on its domain of
definition.  Since the ordered eigenvalues are continuous functions of
a symmetric matrix, both $r$ and $s$ are measurable there.

Fix $b>a\ge a_0$ and introduce
\begin{equation}\label{eq:transformed-contact-functions}
 U_a(x):=u(x)+\frac a2\abs{x}^2,
 \qquad
 w_a(x):=\Gamma_u^a(x)+\frac a2\abs{x}^2
          =\Gamma_{U_a}^0(x).
\end{equation}
By \eqref{eq:envelope-transform},
\begin{equation}\label{eq:transformed-contact-sets}
 A_0(U_a)=A_a(u),
 \qquad
 A_{b-a}(U_a)=A_b(u).
\end{equation}
Thus the passage from opening $a$ to opening $b$ is represented, in the transformed variables, by paraboloids of opening $-(b-a)$.  This normalization is important: it is the difference $b-a$, rather than $b$, that enters the vertex map.

\begin{proposition}[Exact spectral envelope]
\label{prop:spectral-envelope}
Let $E$ be a set of new contacts obtained by taking paraboloids of
opening $-(b-a)$ tangent from below to $w_a$ and sliding them upward
until they touch $U_a$.  At every $x\in E$ at which $u$ is twice
differentiable in the sense of Alexandrov and
$1\le r(x)\le n-1$,
\begin{equation}\label{eq:spectral-Jacobian}
 \det\left(\frac{bI+D^2u(x)}{b-a}\right)
 \le
 \frac{(b-s(x))^{r(x)}
 \left(b+\dfrac{\kappa r(x)}{n-r(x)}s(x)\right)^{n-r(x)}}{(b-a)^n}.
\end{equation}
For $r(x)=n$, the right-hand side is interpreted as
$(b-s(x))^n/(b-a)^n$; for $r(x)=0$, it is $(b/(b-a))^n$.
\end{proposition}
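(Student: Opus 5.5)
The plan is to prove the estimate \eqref{eq:spectral-Jacobian} pointwise, using only that $x$ is a contact point at opening $b$, the Pucci inequality, and the AM--GM inequality. The "new contact" structure of $E$ plays no role beyond guaranteeing $x\in A_b(u)$. The factor $(b-a)^{-n}$ appears identically on both sides, so it suffices to show
\[
 \det\bigl(bI+D^2u(x)\bigr)\le (b-s)^r\Bigl(b+\tfrac{\kappa r}{n-r}s\Bigr)^{n-r}.
\]

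First, I locate $x$ in the right contact set. By construction, a point of $E$ is touched from below by a paraboloid of opening $-(b-a)$ that lies below $U_a$ throughout $\Omega$. By \eqref{eq:transformed-contact-sets}, $x\in A_{b-a}(U_a)=A_b(u)$. Undoing the transformation \eqref{eq:envelope-transform}, a paraboloid of opening $-b$ touches $u$ from below at $x$.

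At an Alexandrov point, the second-order expansion of $u$ minus this paraboloid gives $D^2u(x)\ge -bI$. Hence every eigenvalue of $bI+D^2u(x)$ is nonnegative: the numbers $b-q_i\ge0$ and $b+\mu_j\ge b>0$. Also, since $u$ is semiconcave, the viscosity inequality \eqref{eq:Pucci-on-Omega} holds classically with the Alexandrov Hessian, as recorded before the statement. That gives
\[
 \lambda\sum_j\mu_j-\Lambda\sum_i q_i\le0,
 \qquad\text{i.e.}\qquad
 \sum_{j=1}^{n-r}\mu_j\le\kappa\, r\, s.
\]

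Next, I factor the determinant as $\prod_{i}(b-q_i)\prod_j(b+\mu_j)$ and apply AM--GM to each block separately, which is legitimate since all factors are nonnegative. The negative block gives $\prod_i(b-q_i)\le(b-s)^r$, because the mean of the $b-q_i$ is $b-s$. The nonnegative block gives
\[
 \prod_j(b+\mu_j)\le\Bigl(b+\tfrac1{n-r}\textstyle\sum_j\mu_j\Bigr)^{n-r}
 \le\Bigl(b+\tfrac{\kappa r}{n-r}s\Bigr)^{n-r},
\]
where the last step uses the Pucci bound and monotonicity. Multiplying the two bounds and dividing by $(b-a)^n$ yields \eqref{eq:spectral-Jacobian}.

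The endpoint cases are read off directly. If $r=n$, there is no second block, and the bound is $(b-s)^n$. If $r=0$, Pucci forces $D^2u(x)=0$, so the determinant equals $b^n$.

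The only delicate point is the first step: justifying $D^2u(x)\ge -bI$ and the pointwise Pucci inequality at Alexandrov points. For a semiconcave function this is standard. The upper expansion comes from semiconcavity and the lower touching from the paraboloid, and together they give the one-sided Hessian comparison. I would also note that $b-s\ge0$ follows automatically from $q_i\le b$, so the right-hand side is well defined and nonnegative.
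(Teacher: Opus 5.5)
Your proposal is correct and follows essentially the same route as the paper: the contact condition gives $D^2u(x)\ge -bI$, the Pucci inequality bounds $\sum_j\mu_j$ by $\kappa r s$, and the arithmetic--geometric mean inequality is applied separately to the negative and nonnegative spectral blocks of $bI+D^2u(x)$. Your treatment of the endpoint cases $r=0$ and $r=n$ also matches the paper's.
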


\begin{proof}
At a new contact point, $D^2U_a\ge-(b-a)I$, and therefore
\begin{equation}\label{eq:Hessian-lower-at-contact}
 D^2u\ge-bI.
\end{equation}
In particular, $0\le q_i\le b$.  The Pucci inequality gives
\begin{equation}\label{eq:Pucci-balance}
 \sum_{j=1}^{n-r}\mu_j
 \le\kappa\sum_{i=1}^rq_i
 =\kappa r s.
\end{equation}
By \eqref{eq:Hessian-lower-at-contact}, the matrix
$bI+D^2u(x)$ is positive semidefinite.  For
$1\le r=r(x)\le n-1$, its eigenvalues are
\[
 b-q_1,\ldots,b-q_r,
 \qquad
 b+\mu_1,\ldots,b+\mu_{n-r}.
\] 
Applying the arithmetic--geometric mean inequality separately to the negative and nonnegative spectral groups, we obtain
\[
 \prod_{i=1}^r(b-q_i)\le(b-s)^r
\]
and, by \eqref{eq:Pucci-balance},
\[
 \prod_{j=1}^{n-r}(b+\mu_j)
 \le\left(b+\frac{\kappa r}{n-r}s\right)^{n-r}.
\]
Multiplication proves \eqref{eq:spectral-Jacobian}; the endpoint cases are immediate.
\end{proof}
The estimate is sharp under the pointwise constraints.  Indeed, for
$1\le r\le n-1$ and $0<s\le b$, every matrix
\[
 H_{r,s,R}
 :=
 R^T\operatorname{diag}\left(
 -sI_r,\frac{\kappa rs}{n-r}I_{n-r}
 \right)R,
 \qquad R\in O(n),
\]
satisfies $H_{r,s,R}\ge-bI$ and
$\Mminus(H_{r,s,R})=0$, and attains equality in
\eqref{eq:spectral-Jacobian}.  Thus
\Cref{prop:spectral-envelope} is the exact pointwise envelope allowed
by the contact constraint and the Pucci cone.

For $x\in\Omega$, define
\begin{equation}\label{eq:tau-definition}
 \tau(x):=\inf\{a\ge0:x\in A_a(u,\Omega)\}.
\end{equation}
The infimum is attained whenever it is finite.  Indeed, if
$a_j\downarrow\tau(x)$ and $P_j$ are supporting paraboloids of opening
$-a_j$ at $x$, their slopes are bounded by evaluating $P_j\le u$ at
$x\pm he$; a subsequence then converges to a supporting paraboloid of
opening $-\tau(x)$.  Consequently,
\begin{equation}\label{eq:contact-set-tau}
 A_t(u,\Omega)\cap\Omega
 =
 \{x\in\Omega:\tau(x)\le t\}.
\end{equation}
For fixed $t$, the envelope $\Gamma_u^t$ is lower semicontinuous, being
a supremum of continuous functions.  Since $u-\Gamma_u^t$ is
nonnegative and upper semicontinuous,
\[
 A_t(u,\Omega)
 =
 \bigcap_{k=1}^{\infty}
 \left\{x\in\overline\Omega:
 u(x)-\Gamma_u^t(x)<\frac1k\right\}
\]
is Borel measurable.  It follows from
\eqref{eq:contact-set-tau} that $\tau$ is Borel measurable on $\Omega$.
Finally, if $u$ is twice differentiable at $x$ and
$\tau(x)<\infty$, the supporting paraboloid of opening $-\tau(x)$ gives
\[
 D^2u(x)\ge-\tau(x)I.
\]
Therefore
\begin{equation}\label{eq:s-le-tau}
 0\le s(x)\le\tau(x).
\end{equation}
For $0\le r\le n$, define the negative-index stratum
\[
 \Sigma_r
 :=
 \left\{
 x\in\Omega:
 \begin{array}{l}
 \text{$u$ is twice differentiable at $x$ in the sense of Alexandrov,}\\
 r(x)=r
 \end{array}
 \right\}.
\]
Each $\Sigma_r$ is measurable.  Locally, semiconcavity makes
$x\mapsto C\abs{x}^2/2-u(x)$ convex for a suitable constant $C$.
Alexandrov's theorem \cite[Theorem~6.9]{EvansGariepy} therefore gives
\[
 \abs{\Omega\setminus\bigcup_{r=0}^n\Sigma_r}=0.
\]
Thus $\Sigma_0,\ldots,\Sigma_n$ form, up to a null set, the measurable
partition of $\Omega$ by the actual negative index of the Hessian.

\begin{proposition}[Shell inequality by negative index]
\label{prop:shell-inequality}
Fix $\rho>1$ and define
\[
 m(a):=\abs{\Omega\setminus A_a(u,\Omega)}.
\]
Then, for every $a\ge a_0$,
\begin{equation}\label{eq:shell-inequality}
 m(a)\le
 \sum_{r=0}^n
 \int_{\{a<\tau\le\rho a\}\cap\Sigma_r}
 \left(\frac{\rho}{\rho-1}\right)^n
 P_{n,r,\kappa}\left(\frac{s(x)}{\rho a}\right)\dd x,
\end{equation}
where
\begin{equation}\label{eq:endpoint-kernels}
 P_{n,0,\kappa}\equiv1,
 \qquad
 P_{n,n,\kappa}(z)=(1-z)^n.
\end{equation}
\end{proposition}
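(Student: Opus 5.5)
The plan is to apply the sliding-paraboloid argument of Mooney in the transformed variables \eqref{eq:transformed-contact-functions}, with $b=\rho a$, and to estimate the vertex set from above by the area formula, using \Cref{prop:spectral-envelope} stratum by stratum.

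First, set $b=\rho a$ and $\delta=b-a=(\rho-1)a$. Let $F:=\Omega\setminus A_a(u)=\Omega\setminus A_0(U_a)$; by \eqref{eq:compact-noncontact} this set is compactly contained in $\Omega$. For each $x_0\in F$ and each supporting plane of the convex function $w_a=\Gamma^0_{U_a}$ at $x_0$, take the tangent paraboloid of opening $-\delta$ and slide it upward until it first touches $U_a$. Its vertex is unchanged by the vertical translation, so the vertex set $V_F$ of \Cref{lem:vertex-expansion} (with $w=w_a$) is contained in the set of vertices of paraboloids of opening $-\delta$ that touch $U_a$ from below at first-contact points. These first-contact points lie in $A_{\delta}(U_a)=A_b(u)$.

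Next I need the contact points to be genuinely new and interior. \Cref{lem:separation}, applied to $v=U_a$, shows they avoid $A_0(U_a)$. Here $\Mminus(D^2U_a)\le n\Lambda a$ is finite, since adding $\frac a2|x|^2$ raises $\Mminus$ by at most $\Lambda n a$. Interiority should follow from the compact-noncontact hypothesis: boundary points of $\Omega$ lie in $A_0(U_a)$, and near the boundary the envelope coincides with $U_a$. I expect this is where some care is required, for instance by arguing that a first contact on $\partial\Omega$ would contradict the strict inequality $Q(x_0)=w_a(x_0)<U_a(x_0)$ combined with the same transversality argument. So the set $E$ of new contacts lies in $\{a<\tau\le\rho a\}$, by \eqref{eq:contact-set-tau}. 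For any contact point $x\in E$ the vertex is
\[
T(x)=x+\frac{\nabla U_a(x)}{\delta},
\]
since there $DU_a$ matches the paraboloid's gradient. Semiconcavity of $u$ makes $U_a$ differentiable at touching points and $T$ Lipschitz on $E$: $U_a$ is squeezed between a concave quadratic and the paraboloid, so $\nabla U_a$ is Lipschitz on $E$ (the standard $C^{1,1}$ at contact argument). Hence the area formula gives
\[
|V_F|\le\int_E\det DT\,\dd x,\qquad DT=\frac{\delta I+D^2U_a}{\delta}=\frac{bI+D^2u}{b-a}
\]
almost everywhere on $E$. This holds at Alexandrov points, where the Lipschitz extension's derivative agrees with the Alexandrov Hessian almost everywhere.

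Finally, combine. By \Cref{lem:vertex-expansion}, $m(a)=|F|\le|V_F|$. Split $E$ up to null sets into the strata $E\cap\Sigma_r$ and apply \eqref{eq:spectral-Jacobian} with $b=\rho a$. Factoring $b^n/(b-a)^n=(\rho/(\rho-1))^n$ turns the right-hand side into
\[
\left(\frac{\rho}{\rho-1}\right)^nP_{n,r,\kappa}\!\left(\frac{s}{\rho a}\right),
\]
with the endpoint conventions \eqref{eq:endpoint-kernels} matching the cases $r=0$ and $r=n$ of \Cref{prop:spectral-envelope}. All kernels are nonnegative because $s\le\tau\le\rho a$ by \eqref{eq:s-le-tau}, so enlarging $E\cap\Sigma_r$ to $\{a<\tau\le\rho a\}\cap\Sigma_r$ only increases the bound, which yields \eqref{eq:shell-inequality}.

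The main obstacle I anticipate is the rigorous justification of the area formula step, namely the Lipschitz regularity of $T$ on $E$ and the identification of $DT$ with the Alexandrov Hessian. I would handle it through semiconcavity together with the one-sided touching by a paraboloid, which gives two-sided quadratic control at each contact point.
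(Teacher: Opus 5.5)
Your proposal is correct and follows the paper's proof essentially step for step. The shared steps are: sliding paraboloids of opening $-(\rho-1)a$ from $w_a$ up to $U_a$, \Cref{lem:separation} for newness, a Lipschitz vertex map $x\mapsto x+\nabla U_a(x)/\delta$ from semiconcavity plus one-sided touching, the area formula, and \Cref{prop:spectral-envelope} applied stratum by stratum before enlarging to the shell.

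The two points you leave loose are handled in the paper along the lines you suggest. First, your $F=\Omega\setminus A_a(u,\Omega)$ is open, not compact. The paper instead works with compact $F\subset\Omega\setminus A_a(u,\Omega)$ by inner regularity, as \Cref{lem:vertex-expansion} requires, and this also makes $E$ compact for the area formula. Second, for interiority, suppose $x_1\in\partial\Omega$ were a first contact. The convex function $w_a-Q$ equals $0$ at $x_0$ and $t_Q$ at $x_1$, so it is $<t_Q$ on the open segment between them. But $U_a=w_a$ at segment points near $x_1$ by \eqref{eq:compact-noncontact}, which contradicts $U_a-Q\ge t_Q$.
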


\begin{proof}
By inner regularity, it is enough to prove the estimate with $m(a)$
replaced by $\abs{F}$, where
\[
 F\subset\Omega\setminus A_a(u,\Omega)
   =\{U_a>w_a\}
\]
is compact.  Set
\[
 \delta:=(\rho-1)a>0.
\]
For $x_0\in F$ and $\xi\in\partial w_a(x_0)$, consider the paraboloid
\[
 Q_{x_0,\xi}(y)
 :=
 w_a(x_0)+\xi\cdot(y-x_0)
 -\frac{\delta}{2}\abs{y-x_0}^2.
\]
It touches $w_a$ from below at $x_0$.  Let $\mathcal Q_F$ be the
family of all such paraboloids, and let $V_F$ be their vertex set.
By \Cref{lem:vertex-expansion},
\begin{equation}\label{eq:vertices-dominate-noncontact}
 \abs{V_F}\ge\abs{F}.
\end{equation}

Fix $Q=Q_{x_0,\xi}\in\mathcal Q_F$.  Since
$\xi\in\partial w_a(x_0)$,
\begin{equation}\label{eq:strict-separation-from-envelope}
 w_a(y)-Q(y)
 \ge\frac{\delta}{2}\abs{y-x_0}^2
 \qquad(y\in\overline\Omega).
\end{equation}
At $x_0$ one has
\[
 U_a(x_0)>w_a(x_0)=Q(x_0),
\]
whereas \eqref{eq:strict-separation-from-envelope} and $U_a\ge w_a$
give $U_a(y)>Q(y)$ whenever $y\ne x_0$.  Since $U_a-Q$ is continuous
on $\overline\Omega$, it follows that
\[
 t_Q:=\min_{\overline\Omega}(U_a-Q)>0.
\]
Let $E$ be the union, over $Q\in\mathcal Q_F$, of the corresponding
first-contact sets:
\[
 E
 :=
 \bigcup_{Q\in\mathcal Q_F}
 \{x\in\overline\Omega:U_a(x)=Q(x)+t_Q\}.
\]

We first show that every first contact is interior.  Suppose that
$x_1\in\partial\Omega$ is a first-contact point associated with
$Q\in\mathcal Q_F$, whose point of tangency with $w_a$ is
$x_0\in F$.  Since every boundary point belongs to
$A_0(U_a,\Omega)$,
\[
 U_a(x_1)=w_a(x_1).
\]
Set
\[
 H:=w_a-Q.
\]
The function $H$ is $\delta$-strongly convex, and
\[
 H(x_0)=0,
 \qquad
 H(x_1)=t_Q.
\]
For $0<\theta<1$, put
\[
 x_\theta:=(1-\theta)x_0+\theta x_1.
\]
Since
\[
 \Omega\setminus A_0(U_a,\Omega)
 =
 \Omega\setminus A_a(u,\Omega)
 \Subset\Omega,
\]
one has $x_\theta\in A_0(U_a,\Omega)$ when $\theta$ is sufficiently
close to $1$.  Thus $U_a(x_\theta)=w_a(x_\theta)$, while strong
convexity gives
\begin{align*}
 U_a(x_\theta)-Q(x_\theta)
 &=H(x_\theta)\\
 &\le
 (1-\theta)H(x_0)+\theta H(x_1)
 -\frac{\delta}{2}\theta(1-\theta)\abs{x_1-x_0}^2\\
 &=
 \theta t_Q
 -\frac{\delta}{2}\theta(1-\theta)\abs{x_1-x_0}^2
 <t_Q.
\end{align*}
This contradicts the definition of $t_Q$.  Hence
\[
 E\subset\Omega.
\]

Moreover, in the viscosity sense,
\[
 \Mminus(D^2U_a)
 \le\Mminus(D^2u)+\Mplus(aI)
 \le n\Lambda a
 \qquad\text{in }\Omega.
\]
We may therefore apply \Cref{lem:separation} to every
$Q\in\mathcal Q_F$.  It follows that
\[
 E\cap A_0(U_a,\Omega)=\varnothing.
\]
Since each lifted paraboloid $Q+t_Q$ has opening $-\delta$, every
point of $E$ belongs to $A_\delta(U_a,\Omega)$.  Recalling
\eqref{eq:transformed-contact-sets}, we obtain
\begin{equation}\label{eq:new-contacts-in-shell}
 E\subset
 A_{\rho a}(u,\Omega)\setminus A_a(u,\Omega).
\end{equation}

We shall use the area formula on $E$, so we record its compactness.
Because $F\Subset\Omega$ and $w_a$ is convex, its subgradients are
uniformly bounded on $F$.  Moreover, the graph of $\partial w_a$ is
closed.  Hence
\[
 \mathcal G_F
 :=
 \{(x_0,\xi):x_0\in F,\ \xi\in\partial w_a(x_0)\}
\]
is compact.  The map
\[
 (x_0,\xi)\longmapsto Q_{x_0,\xi}
\]
is continuous in the uniform norm on $\overline\Omega$, and so is
\[
 Q\longmapsto t_Q
 =\min_{\overline\Omega}(U_a-Q).
\]
Consequently, the incidence set
\[
 \left\{
 (Q,x)\in\mathcal Q_F\times\overline\Omega:
 U_a(x)=Q(x)+t_Q
 \right\}
\]
is compact.  Its projection onto the second factor is $E$, and hence
$E$ is compact.  Since $E\cap\partial\Omega=\varnothing$, we conclude
that
\[
 E\Subset\Omega.
\]

We next identify the vertex map and its differential.  For every
$x\in E$, choose a first-contact paraboloid and write it as
\[
 P_x(y)
 =
 U_a(x)+\zeta_x\cdot(y-x)
 -\frac{\delta}{2}\abs{y-x}^2,
 \qquad
 P_x\le U_a\quad\text{in }\overline\Omega.
\]
Define
\[
 g(y)
 :=
 U_a(y)+\frac{\delta}{2}\abs{y}^2
 =
 u(y)+\frac{\rho a}{2}\abs{y}^2.
\]
Adding $\delta\abs{y}^2/2$ to the supporting inequality gives
\[
 g(x)+p_x\cdot(y-x)\le g(y)
 \qquad(y\in\overline\Omega),
 \qquad
 p_x:=\zeta_x+\delta x.
\]

The function $g$ is semiconcave in a neighborhood of $E$.  For small
$h$, set
\[
 R_x(h):=g(x+h)-g(x)-p_x\cdot h.
\]
The affine support gives $R_x(h)\ge0$ and $R_x(-h)\ge0$, while
semiconcavity gives
\[
 R_x(h)+R_x(-h)\le C\abs{h}^2.
\]
It follows that
\[
 0\le R_x(h)\le C\abs{h}^2.
\]
Thus $g$ is differentiable at every $x\in E$, with
\[
 p_x=\nabla g(x).
\]
In particular, $p_x$ is independent of the chosen first-contact
paraboloid.

The map $x\mapsto p_x$ is Lipschitz on $E$.  Indeed, let $C$ be a
semiconcavity constant for $g$ in a neighborhood of $E$.  Given
$x,y\in E$, put $d:=\abs{x-y}$.  If $d$ is sufficiently small and
$p_x\ne p_y$, set
\[
 e:=\frac{p_y-p_x}{\abs{p_y-p_x}},
 \qquad
 z:=x+de.
\]
The affine support at $y$ and the semiconcavity inequality at $x$
give
\[
 g(y)+p_y\cdot(z-y)
 \le g(z)
 \le g(x)+p_x\cdot(z-x)+\frac C2d^2.
\]
Semiconcavity at $y$, evaluated at $x$, also gives
\[
 g(y)-g(x)+p_y\cdot(x-y)\ge-\frac C2d^2.
\]
Combining these inequalities yields
\[
 d\abs{p_y-p_x}\le Cd^2.
\]
Hence
\[
 \abs{p_y-p_x}\le C\abs{x-y}
\]
for nearby points of $E$.

Since $E\Subset\Omega$, the affine supporting inequalities may be
evaluated at $x\pm he$, with one fixed $h>0$, uniformly for
$x\in E$ and $e\in\mathbb S^{n-1}$.  This shows that the slopes
$p_x$ are uniformly bounded.  After increasing the constant, the
same Lipschitz estimate therefore holds for every $x,y\in E$.

The vertex of $P_x$ is given by the well-defined map
\[
 \Phi(x)
 :=
 \frac{p_x}{\delta}
 =
 x+\frac{\nabla U_a(x)}{\delta}
 =
 \frac{\rho a\,x+\nabla u(x)}{(\rho-1)a}.
\]
Thus $\Phi:E\to\Rn$ is Lipschitz.  Vertical translation does not
change the vertex of a paraboloid: if $Q\in\mathcal Q_F$ and
$Q+t_Q$ first touches $U_a$ at $x$, then
\[
 \Phi(x)=\operatorname{vertex}(Q).
\]
Consequently,
\begin{equation}\label{eq:vertices-contained-in-image}
 V_F\subset\Phi(E).
\end{equation}

For clarity, if $x$ is a density point of $E$, we write
$\operatorname{ap}D\Phi(x)=L$ when, for every $\varepsilon>0$,
\[
 \lim_{r\downarrow0}
 \frac{
 \abs{
 \left\{
 y\in E\cap B_r(x):
 \abs{\Phi(y)-\Phi(x)-L(y-x)}
 >
 \varepsilon\abs{y-x}
 \right\}}
 }{\abs{B_r}}
 =0.
\]
Let $x\in E$ be a point at which $g$ is twice differentiable in the
sense of Alexandrov.  Set $H:=D^2g(x)$ and write
\[
 g(x+k)
 =
 g(x)+p_x\cdot k+\frac12k\cdot Hk+R(k),
 \qquad
 R(k)=o(\abs{k}^2).
\]
Let $y=x+h\in E$, fix $0<t<1$, and put
\[
 k:=t\abs{h}e,
 \qquad
 e\in\mathbb S^{n-1}.
\]
For $y$ sufficiently close to $x$, the affine support at $y$ may be
evaluated at $y\pm k$, and therefore
\[
 \frac{g(y)-g(y-k)}{t\abs{h}}
 \le p_y\cdot e
 \le
 \frac{g(y+k)-g(y)}{t\abs{h}}.
\]
Expanding the three values of $g$ at $x$ gives
\begin{align*}
 \frac{g(y+k)-g(y)}{t\abs{h}}
 &=
 p_x\cdot e+Hh\cdot e
 +\frac{t\abs{h}}2e\cdot He
 +\frac{R(h+k)-R(h)}{t\abs{h}},\\
 \frac{g(y)-g(y-k)}{t\abs{h}}
 &=
 p_x\cdot e+Hh\cdot e
 -\frac{t\abs{h}}2e\cdot He
 +\frac{R(h)-R(h-k)}{t\abs{h}}.
\end{align*}
For fixed $t$, the remainder estimates are uniform in
$e\in\mathbb S^{n-1}$.  Taking the supremum over $e$ yields
\[
 \abs{p_y-p_x-Hh}
 \le
 \frac t2\abs{H}\abs{h}
 +\eta_t(\abs{h})\abs{h},
\]
where $\eta_t(r)\to0$ as $r\downarrow0$.  Dividing by $\abs{h}$,
first letting $y\to x$ through $E$, and then letting $t\downarrow0$,
we obtain
\[
 \lim_{\substack{E\ni y\to x\\y\ne x}}
 \frac{
 \abs{p_y-p_x-D^2g(x)(y-x)}
 }{\abs{y-x}}
 =0.
\]
Thus, whenever $x$ is also a density point of $E$,
\begin{equation}\label{eq:approximate-differential-vertex-map}
 \operatorname{ap}D\Phi(x)
 =
 \frac{D^2g(x)}{\delta}
 =
 \frac{\rho aI+D^2u(x)}{(\rho-1)a}.
\end{equation}
By Alexandrov's theorem and the Lebesgue density theorem, this identity
holds for almost every $x\in E$; see
\cite[Theorems~1.35 and~6.9]{EvansGariepy}.

Let $\widetilde\Phi:\Rn\to\Rn$ be a Lipschitz extension of $\Phi$.
At almost every $x\in E$, the point $x$ is a density point of $E$,
the map $\widetilde\Phi$ is differentiable at $x$, and
\eqref{eq:approximate-differential-vertex-map} holds.  Since
$\widetilde\Phi=\Phi$ on $E$, uniqueness of the approximate
differential gives
\[
 D\widetilde\Phi(x)=\operatorname{ap}D\Phi(x)
 \qquad\text{for almost every }x\in E.
\]
The area formula applied to $\widetilde\Phi$ on $E$ therefore yields
\begin{align*}
 \abs{\Phi(E)}
 &\le
 \int_{\Rn}N(\widetilde\Phi,E,z)\,\dd z\\
 &=
 \int_E\abs{\det D\widetilde\Phi(x)}\,\dd x\\
 &=
 \int_E
 \abs{\det\bigl(\operatorname{ap}D\Phi(x)\bigr)}\,\dd x,
\end{align*}
where $N(\widetilde\Phi,E,z)$ denotes the multiplicity of
$\widetilde\Phi$ on $E$ at $z$.  Here we used the Lipschitz-extension,
Rademacher, and area-formula theorems; see
\cite[Theorems~3.1, 3.2, and~3.8]{EvansGariepy}.

At almost every $x\in E$, the function $U_a-P_x$ has a local minimum
at $x$, and hence
\[
 \rho aI+D^2u(x)
 =
 D^2U_a(x)+\delta I
 \ge0.
\]
The determinant in
\eqref{eq:approximate-differential-vertex-map} is therefore
nonnegative.  Since $\Sigma_0,\ldots,\Sigma_n$ partition $E$ up to a
null set, \eqref{eq:vertices-dominate-noncontact},
\eqref{eq:vertices-contained-in-image}, the area formula, and
\Cref{prop:spectral-envelope}, with $b=\rho a$, give
\begin{align*}
 \abs{F}
 &\le\abs{V_F}
 \le\abs{\Phi(E)}\\
 &\le
 \int_E
 \abs{\det\bigl(\operatorname{ap}D\Phi(x)\bigr)}
 \dd x\\
 &=
 \sum_{r=0}^n
 \int_{E\cap\Sigma_r}
 \det\left(
 \frac{\rho aI+D^2u(x)}{(\rho-1)a}
 \right)\dd x\\
 &\le
 \sum_{r=0}^n
 \int_{E\cap\Sigma_r}
 \left(\frac{\rho}{\rho-1}\right)^n
 P_{n,r,\kappa}
 \left(\frac{s(x)}{\rho a}\right)\dd x.
\end{align*}

Finally, \eqref{eq:contact-set-tau} and
\eqref{eq:new-contacts-in-shell} imply
\[
 E\subset\{a<\tau\le\rho a\}.
\]
For every $x$ in this shell,
\[
 0\le\frac{s(x)}{\rho a}
 \le\frac{\tau(x)}{\rho a}
 \le1.
\]
All the mixed and endpoint kernels are nonnegative on $[0,1]$.
Therefore each set $E\cap\Sigma_r$ may be enlarged to
$\{a<\tau\le\rho a\}\cap\Sigma_r$, and we obtain
\[
 \abs{F}
 \le
 \sum_{r=0}^n
 \int_{\{a<\tau\le\rho a\}\cap\Sigma_r}
 \left(\frac{\rho}{\rho-1}\right)^n
 P_{n,r,\kappa}
 \left(\frac{s(x)}{\rho a}\right)\dd x.
\]
Taking the supremum over compact
$F\subset\Omega\setminus A_a(u,\Omega)$ proves
\eqref{eq:shell-inequality}.
\end{proof}

\begin{remark}[Why the pointwise negative index matters]\label{rem:actual-index}
The integer $r$ in \Cref{prop:spectral-envelope} is the actual number of negative eigenvalues at the contact point, and it may vary across the contact set.  It cannot be replaced inside the determinant estimate by a global lower bound.  The decomposition in \eqref{eq:shell-inequality} is precisely what allows all possible values of $r$ to be treated simultaneously without sacrificing the pointwise information.
\end{remark}

\section{Continuum integration in the opening}\label{sec:continuum}

We now turn \eqref{eq:shell-inequality} into an integrability estimate.  The only optimization that remains is one-dimensional.

\subsection{Critical exponents for the negative index}

For $1\le r\le n-1$, differentiation of \eqref{eq:index-kernel-intro} gives
\begin{equation}\label{eq:P-prime-factorization}
\begin{split}
 P'_{n,r,\kappa}(z)
 ={}&r(1-z)^{r-1}
 \left(1+\frac{\kappa r}{n-r}z\right)^{n-r-1}\\
 &\times\left[(\kappa-1)-\frac{\kappa n}{n-r}z\right].
\end{split}
\end{equation}
For $\kappa>1$, the derivative changes sign exactly once, from positive to negative, at
\begin{equation}\label{eq:P-maximizer}
 z_{n,r,\kappa}^*=\frac{(\kappa-1)(n-r)}{\kappa n}.
\end{equation}

\begin{lemma}[Existence and uniqueness of the critical exponent]\label{lem:critical-exponent}
Let $\kappa>1$ and $1\le r\le n-1$.  The function
\[
 \mathcal I_{n,r,\kappa}(p)
 :=\int_0^1z^{-p}P'_{n,r,\kappa}(z)\dd z,
 \qquad 0\le p<1,
\]
has a unique zero in $(0,1)$.  Moreover,
\begin{equation}\label{eq:I-sign}
 \mathcal I_{n,r,\kappa}(p)<0
 \quad\Longleftrightarrow\quad
 p<p_{n,r}(\kappa).
\end{equation}
\end{lemma}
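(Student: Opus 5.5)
The plan is to analyze the sign of $\mathcal I_{n,r,\kappa}$ at the two endpoints of $[0,1)$, and then use the single sign change of $P'_{n,r,\kappa}$ at $z^*=z^*_{n,r,\kappa}$ to obtain monotonicity in $p$.

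The first step is to record the endpoint values. At $p=0$, $\mathcal I_{n,r,\kappa}(0)=P_{n,r,\kappa}(1)-P_{n,r,\kappa}(0)=0-1=-1<0$, since $r\ge1$ forces $P_{n,r,\kappa}(1)=0$. As $p\uparrow1$, the weight $z^{-p}$ concentrates near $z=0$. There $P'_{n,r,\kappa}(0)=r(\kappa-1)>0$ by \eqref{eq:P-prime-factorization}, so $\int_0^{\epsilon}z^{-p}P'\,dz\to+\infty$ as $p\to1$. The contribution from $[\epsilon,1]$ stays bounded, so $\mathcal I_{n,r,\kappa}(p)\to+\infty$. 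Continuity of $\mathcal I$ on $[0,1)$, which follows by dominated convergence for $p<1$, then gives at least one zero in $(0,1)$.

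The second step, which is the key one, is strict monotonicity. I would show that $\mathcal I_{n,r,\kappa}$ is strictly increasing in $p$ by the standard single-crossing trick. Write
\[
 \mathcal I_{n,r,\kappa}(p)=\int_0^1 (z/z^*)^{-p}\,(z^*)^{-p}P'_{n,r,\kappa}(z)\dd z .
\]
For $p_1<p_2$, consider $\mathcal I(p_2)-(z^*)^{-(p_2-p_1)}\mathcal I(p_1)=\int_0^1 z^{-p_1}P'(z)\bigl[z^{-(p_2-p_1)}-(z^*)^{-(p_2-p_1)}\bigr]\dd z$. The bracket is positive exactly where $z<z^*$, which is exactly where $P'>0$, and it is negative where $z>z^*$, where $P'<0$. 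Hence the integrand is nonnegative and not identically zero, so the difference is strictly positive. This yields the implication: if $\mathcal I(p_1)\ge0$, then $\mathcal I(p_2)>0$. Equivalently, if $\mathcal I(p_2)\le 0$, then $\mathcal I(p_1)<0$. This is a sign-monotonicity statement, which is all we need, even though $\mathcal I$ itself may not be globally monotone.

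For the conclusion, let $p_{n,r}(\kappa)$ be any zero. By the step above, every $p<p_{n,r}(\kappa)$ gives $\mathcal I(p)<0$, and every $p>p_{n,r}(\kappa)$ gives $\mathcal I(p)>0$. Two distinct zeros would contradict this, so the zero is unique, and \eqref{eq:I-sign} follows. It remains to check that $0<z^*<1$ when $\kappa>1$ and $1\le r\le n-1$. This is immediate from \eqref{eq:P-maximizer}, and it guarantees that both sign regions of $P'$ have positive measure.

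I expect the main obstacle to be the divergence argument as $p\to1$. It needs care because $P'$ is only positive near $0$, so one should bound $P'\ge r(\kappa-1)/2$ on some interval $[0,\epsilon]$ and use $\int_0^\epsilon z^{-p}\,dz=\epsilon^{1-p}/(1-p)$. The monotonicity step is routine once the single crossing at $z^*$ is in hand.
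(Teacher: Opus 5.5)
Your proof is correct and follows the paper's argument: the endpoint behavior $\mathcal I(0)=-1$ and $\mathcal I(p)\to+\infty$ as $p\uparrow1$, followed by the single-crossing trick of subtracting the value of $z^{-(p_2-p_1)}$ at the sign change $z_*$ of $P'$. Your version compares $\mathcal I(p_2)$ with $z_*^{-(p_2-p_1)}\mathcal I(p_1)$ for arbitrary $p_1<p_2$, whereas the paper applies the trick only at a zero. This yields the sign characterization directly, without the paper's final appeal to continuity and $\mathcal I(0)<0$.
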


\begin{proof}
Suppress the indices and write
\[
 P:=P_{n,r,\kappa},
 \qquad
 \mathcal I:=\mathcal I_{n,r,\kappa},
 \qquad
 z_*:=z_{n,r,\kappa}^*.
\]
Since $P(0)=1$ and $P(1)=0$,
\[
 \mathcal I(0)=\int_0^1P'(z)\dd z=-1.
\]
Moreover, $P'(0)=r(\kappa-1)>0$, and therefore
\[
 \mathcal I(p)\longrightarrow+\infty
 \qquad\text{as }p\uparrow1.
\]
Continuity gives at least one zero in $(0,1)$.

Suppose that $\mathcal I(p_0)=0$ and let $p>p_0$.  Set
\[
 f(z):=z^{-p_0}P'(z).
\]
Then
\[
 \int_0^1f(z)\dd z=0,
\]
while \eqref{eq:P-prime-factorization} shows that $f$ is positive on
$(0,z_*)$ and negative on $(z_*,1)$.  Since
$z\mapsto z^{-(p-p_0)}$ is strictly decreasing, subtraction of its
value at $z_*$ gives
\begin{align*}
 \mathcal I(p)
 &=
 \int_0^1 z^{-(p-p_0)}f(z)\dd z\\
 &=
 \int_0^1
 \left(
 z^{-(p-p_0)}-z_*^{-(p-p_0)}
 \right)f(z)\dd z
 >0.
\end{align*}
Indeed, the integrand is positive on both $(0,z_*)$ and $(z_*,1)$.
Thus $\mathcal I$ can have no second zero to the right of $p_0$.
Applying the same conclusion to any hypothetical zero to the left of
$p_0$ proves uniqueness.  Since $\mathcal I(0)<0$, continuity then
yields
\[
 \mathcal I(p)<0
 \quad\Longleftrightarrow\quad
 p<p_{n,r}(\kappa).
\]
\end{proof}

For $0<p<1$, define the aperture potential
\begin{equation}\label{eq:Psi-definition}
 \Psi_{n,r,\kappa,p}(y)
 :=p y^p\int_0^y z^{-p-1}
 \big(P_{n,r,\kappa}(z)-1\big)\dd z,
 \qquad 0\le y\le1.
\end{equation}
For $r=0$ and $r=n$, the same notation refers to the endpoint kernels in \eqref{eq:endpoint-kernels}.

\begin{lemma}[The aperture potential]\label{lem:Psi-contraction}
If $1\le r\le n-1$ and $0<p<p_{n,r}(\kappa)$, then
\begin{equation}\label{eq:Psi-strict}
 \sup_{0\le y\le1}\Psi_{n,r,\kappa,p}(y)<1.
\end{equation}
For $r=0$ or $r=n$, the same conclusion holds for every $0<p<1$.
\end{lemma}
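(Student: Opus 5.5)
Write $P:=P_{n,r,\kappa}$, $\Psi:=\Psi_{n,r,\kappa,p}$. The plan is to rewrite $\Psi$ through an integration by parts so that its derivative carries the sign structure of $\mathcal I_{n,r,\kappa}$, and then show that the maximum of $\Psi$ on $[0,1]$ stays strictly below $1$.

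\emph{Well-definedness and boundary value.} Since $P(z)-1=O(z)$ near $0$ and $p<1$, the integral in \eqref{eq:Psi-definition} converges. Hence $\Psi$ is continuous on $[0,1]$ with $\Psi(0)=0$, because $y^p\int_0^y z^{-p}\,O(1)\dd z=O(y)$.

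\emph{Integration by parts.} Integrating by parts with $z^{-p-1}=\frac{d}{dz}(-z^{-p}/p)$ gives
\[
 \Psi(y)=1-P(y)+\int_0^y\Bigl(\frac{z}{y}\Bigr)^{-p}P'(z)\dd z ,
\]
with vanishing boundary term at $0$. At $y=1$ this reads
\[
 \Psi(1)=1-P(1)+\mathcal I_{n,r,\kappa}(p).
\]
For $1\le r\le n-1$ we have $P(1)=0$, so $\Psi(1)=1+\mathcal I(p)<1$ by \Cref{lem:critical-exponent}, since $p<p_{n,r}(\kappa)$. For the endpoint kernels $P(1)=0$ as well. When $r=0$ we have $P\equiv1$, hence $\Psi\equiv0$. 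When $r=n$, $P'<0$ on $(0,1)$, so $\Psi(1)<1$ for every $p$.

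\emph{Interior points.} Differentiating the original form,
\[
 \Psi'(y)=\frac py\Psi(y)+p\,y^{-1}\bigl(P(y)-1\bigr).
\]
At an interior maximum $y_0$ this gives
\[
 \Psi(y_0)=1-P(y_0).
\]
Since $P>0$ on $[0,1)$, any interior critical value satisfies $\Psi(y_0)<1$.

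\emph{Conclusion.} The maximum of the continuous function $\Psi$ on the compact interval $[0,1]$ is attained at $0$, at $1$, or at an interior critical point. Each of these values is $<1$, which proves \eqref{eq:Psi-strict}. For $\kappa=1$ the kernel is decreasing, since $P'\le0$ by \eqref{eq:P-prime-factorization}, and the same argument works for every $p<1$.

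The main step is the interior critical-point identity $\Psi(y_0)=1-P(y_0)$. It removes any need to track the sign changes of $\Psi'$. I expect no real obstacle beyond checking that the critical point cannot be $y_0$ near $0$ with a degenerate limit, and this is excluded by $\Psi(0)=0$ and continuity.
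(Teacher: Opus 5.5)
Your proof is correct and follows the paper's argument essentially step for step. The steps match: continuity with $\Psi(0)=0$; the identity $y\Psi'(y)=p(\Psi(y)+P(y)-1)$, which forces $\Psi=1-P<1$ at interior maxima; and integration by parts, which gives $\Psi(1)=1+\mathcal I_{n,r,\kappa}(p)<1$ via \Cref{lem:critical-exponent}, or via $P'\le0$ when $\kappa=1$ or $r=n$. One blemish: you state that $P(1)=0$ for both endpoint kernels, which is false for $r=0$, but you handle that case separately through $\Psi\equiv0$, so the argument is unaffected.
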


\begin{proof}
Suppress the indices.  Since $P(z)-1=O(z)$ at the origin, $\Psi$ is continuous on $[0,1]$ and $\Psi(0)=0$.  Direct differentiation gives
\begin{equation}\label{eq:Psi-derivative}
 y\Psi'(y)=p\big(\Psi(y)+P(y)-1\big).
\end{equation}
At an interior maximum,
\[
 \Psi(y)=1-P(y)<1.
\]
At $y=1$, integration by parts yields
\begin{equation}\label{eq:Psi-at-one}
 \Psi(1)=1+\int_0^1z^{-p}P'(z)\dd z<1.
\end{equation}
For $\kappa>1$, the last inequality follows from \Cref{lem:critical-exponent}; for $\kappa=1$, \eqref{eq:P-prime-factorization} gives $P'\le0$ on $(0,1)$.  Compactness of $[0,1]$ proves \eqref{eq:Psi-strict}.  The case $r=0$ is immediate from $P\equiv1$, while for $r=n$ one again has $P'\le0$.
\end{proof}

\subsection{Full and truncated aperture intervals}

Fix $\rho>1$.  At a point with $\tau(x)=T$ and $s(x)=yT$, the point belongs to the shell $A_{\rho a}\setminus A_a$ precisely when $T/\rho\le a<T$.  Its normalized contribution over this full aperture interval is
\begin{equation}\label{eq:full-aperture-coefficient}
 \mathcal C_{n,r,\kappa,p,\rho}(y)
 :=p\left(\frac{\rho}{\rho-1}\right)^n\rho^{-p}
 \int_1^\rho t^{p-1}P_{n,r,\kappa}\left(\frac yt\right)\dd t.
\end{equation}

\begin{lemma}[Large-ratio contraction]\label{lem:large-rho-contraction}
If $0<p<\underline p_n(\kappa)$, there are $\rho>1$ and $0<\theta<1$, depending only on $n$, $\kappa$, and $p$, such that
\begin{equation}\label{eq:uniform-full-contraction}
 \mathcal C_{n,r,\kappa,p,\rho}(y)\le\theta
 \qquad(0\le r\le n,\ 0\le y\le1).
\end{equation}
\end{lemma}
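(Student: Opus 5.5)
The plan is to rewrite the full-aperture coefficient $\mathcal C_{n,r,\kappa,p,\rho}$ in terms of the aperture potential $\Psi_{n,r,\kappa,p}$ of \eqref{eq:Psi-definition}. We then use \Cref{lem:Psi-contraction} together with the fact that, for $p<1$, the gain $\rho^{-p}$ decays more slowly than the loss $O(\rho^{-1})$ coming from the prefactor. Since $0\le r\le n$ ranges over finitely many values, it suffices to find a $\rho$ that works for each $r$ separately and then take the largest such $\rho$ and the largest resulting $\theta$.

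\emph{Step 1: the identity.} Fix $r$ and suppress indices. In \eqref{eq:full-aperture-coefficient}, substitute $z=y/t$ (for $y>0$; the case $y=0$ follows by continuity or directly). This gives
\[
 \int_1^\rho t^{p-1}P(y/t)\dd t=y^p\int_{y/\rho}^{y}z^{-p-1}P(z)\dd z .
\]
Split $P=1+(P-1)$. The constant part contributes $(\rho^p-1)/p$. For the remaining part, write $\int_{y/\rho}^y=\int_0^y-\int_0^{y/\rho}$ and recognize both terms as values of $\Psi$. The result is
\[
 \mathcal C(y)=\Bigl(\frac{\rho}{\rho-1}\Bigr)^n
 \Bigl[\,1-\rho^{-p}+\rho^{-p}\Psi(y)-\Psi(y/\rho)\Bigr].
\]

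\emph{Step 2: bounds on $\Psi$.} Set $\sigma:=\sup_{[0,1]}\Psi$. By \Cref{lem:Psi-contraction}, $\sigma<1$; this uses $p<\underline p_n(\kappa)\le p_{n,r}(\kappa)$ when $1\le r\le n-1$, and holds for any $p<1$ when $r\in\{0,n\}$. For $r=0$ we have $\Psi\equiv0$. Next, let $L:=\sup_{[0,1]}\abs{P'}$, so that $\abs{P(z)-1}\le Lz$. Then
\[
 \abs{\Psi(w)}\le pLw^p\int_0^w z^{-p}\dd z=\frac{pL}{1-p}\,w .
\]
Applied with $w=y/\rho$ and $y\le 1$, this gives $\abs{\Psi(y/\rho)}\le C_0/\rho$ with $C_0=C_0(n,\kappa,p)$.

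\emph{Step 3: choosing $\rho$.} Combining Steps 1 and 2, for every $0\le y\le1$,
\[
 \mathcal C(y)\le\Bigl(\frac{\rho}{\rho-1}\Bigr)^n\Bigl[1-(1-\sigma)\rho^{-p}+C_0\rho^{-1}\Bigr].
\]
Moreover $(\rho/(\rho-1))^n=1+O(\rho^{-1})$. Since $p<1$ and $1-\sigma>0$, the right-hand side equals $1-(1-\sigma)\rho^{-p}+O(\rho^{-1})$. This is strictly less than $1$ for all sufficiently large $\rho$, and we take $\theta$ to be that value.

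The only genuinely delicate point is Step 1. The rewriting must isolate exactly the term $\rho^{-p}\Psi(y)$, so that the strict contraction of \Cref{lem:Psi-contraction} enters with the favorable weight $\rho^{-p}$. All other error terms must be shown to be $O(1/\rho)$ uniformly in $y$, and the linear bound on $\Psi$ near the origin is what secures this.
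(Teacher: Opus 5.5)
Your proposal is correct and follows essentially the same route as the paper: substitute $z=y/t$, split $P=1+(P-1)$ so that $\rho^{-p}\Psi(y)$ appears, bound the omitted piece on $(0,y/\rho)$ by $O(\rho^{-1})$ using $P(z)-1=O(z)$, and apply \Cref{lem:Psi-contraction} uniformly over the finitely many indices. Your exact identity $\mathcal C(y)=\bigl(\rho/(\rho-1)\bigr)^n\bigl[1-\rho^{-p}+\rho^{-p}\Psi(y)-\Psi(y/\rho)\bigr]$ simply writes out the expansion $1+(\Psi(y)-1)\rho^{-p}+O(\rho^{-1})$ that the paper states more tersely.
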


\begin{proof}
Again suppress the indices.  Split the integral in \eqref{eq:full-aperture-coefficient} into the contribution of $1$ and that of $P-1$.  The change of variables $z=y/t$ gives, uniformly for $0\le y\le1$,
\begin{equation}\label{eq:C-asymptotic}
 \mathcal C_{n,r,\kappa,p,\rho}(y)
 =1+\big(\Psi_{n,r,\kappa,p}(y)-1\big)\rho^{-p}
 +O(\rho^{-1}).
\end{equation}
Indeed, the part omitted from the $z$-integral lies in $(0,y/\rho)$ and is $O(\rho^{-1})$, because $P(z)-1=O(z)$.  Since $p<1$, the error in \eqref{eq:C-asymptotic} is $o(\rho^{-p})$.  By \Cref{lem:Psi-contraction} and the finiteness of the index family, there is $\delta>0$ such that
\[
 \Psi_{n,r,\kappa,p}(y)\le1-\delta
\]
for every $r$ and $y$.  Choosing $\rho$ sufficiently large gives \eqref{eq:uniform-full-contraction}.
\end{proof}

The last interval in a truncated moment integral is only a portion of the full aperture range.  The following monotonicity shows that this cannot increase the coefficient.

\begin{lemma}[Terminal aperture cutoff]\label{lem:upper-cutoff}
Assume $1\le r\le n-1$ and $0<p<p_{n,r}(\kappa)$.  For $0<\ell\le1$, $0\le y\le1$, and $P=P_{n,r,\kappa}$, set
\begin{equation}\label{eq:H-definition}
 H_y(\ell):=\int_\ell^1t^{p-1}P\left(\frac{y\ell}{t}\right)\dd t.
\end{equation}
Then $H_y$ is nonincreasing.  The same conclusion holds for $r=0,n$ and every $0<p<1$.
\end{lemma}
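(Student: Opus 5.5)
We differentiate $H_y$ directly and reduce the sign question to the aperture potential of \Cref{lem:Psi-contraction}. The natural first step is the substitution $t=\ell\sigma$, which turns \eqref{eq:H-definition} into
\[
 H_y(\ell)=\ell^{p}\int_1^{1/\ell}\sigma^{p-1}P\left(\frac{y}{\sigma}\right)\dd\sigma .
\]
The dependence on $\ell$ now sits in two places: the prefactor $\ell^p$ and the upper limit $1/\ell$.

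Differentiating in $\ell$ gives
\[
 H_y'(\ell)=p\ell^{p-1}\int_1^{1/\ell}\sigma^{p-1}P\left(\frac y\sigma\right)\dd\sigma
 -\ell^{p}\,\ell^{-2}\,\ell^{1-p}P(y\ell)
 =\frac{1}{\ell}\Big(pH_y(\ell)-P(y\ell)\Big).
\]
It therefore suffices to show $pH_y(\ell)\le P(y\ell)$ for all $0<\ell\le1$.

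To prove this, change variables once more, $z=y/\sigma$ (for $y>0$; the case $y=0$ gives $pH_0(\ell)=1-\ell^p\le1=P(0)$). This gives
\[
 pH_y(\ell)=p\ell^p y^p\int_{y\ell}^{y}z^{-p-1}P(z)\dd z .
\]
Next split $P=1+(P-1)$. The constant part contributes exactly $1-\ell^p$. Writing $w:=y\ell$, the $(P-1)$ part equals
\[
 w^p p\int_{w}^{y}z^{-p-1}(P(z)-1)\dd z=\Psi(y)\,\ell^p-\Psi(w)+\text{(nothing else)},
\]
since $\Psi(v)=pv^p\int_0^v z^{-p-1}(P-1)\dd z$ and $w^p/y^p=\ell^p$. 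Hence
\[
 pH_y(\ell)=1-\ell^p+\ell^p\Psi(y)-\Psi(y\ell).
\]
The required inequality $pH_y(\ell)\le P(y\ell)$ then reads
\[
 1-\ell^p\bigl(1-\Psi(y)\bigr)\le P(w)+\Psi(w).
\]

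The main obstacle is this last inequality. The left side is at most $1$, because $\Psi(y)<1$ for all $y\in[0,1]$ by \Cref{lem:Psi-contraction}. So it is enough to show
\[
 G(w):=P(w)+\Psi(w)\ge1\qquad\text{on }[0,1],
\]
and to control the slack when $G<1$. For this we use the ODE \eqref{eq:Psi-derivative}, $w\Psi'=p(\Psi+P-1)$, which gives $wG'=wP'+p(G-1)$. With $G(0)=1$, solving this linear equation yields
\[
 G(w)-1=w^p\int_0^w z^{-p}P'(z)\dd z.
\]
Since $P'>0$ on $(0,z_*)$, this is positive there. For $w\ge z_*$ the integral $\int_0^w z^{-p}P'$ decreases in $w$ down to its value at $w=1$, which is $\mathcal I(p)<0$. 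So $G$ can dip below $1$ near $w=1$, and the crude bound fails.

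In that regime we would instead use the exact identity with $y\ge w$. The inequality becomes
\[
 \ell^p\bigl(1-\Psi(y)\bigr)\ge 1-G(w)=-w^p\int_0^w z^{-p}P'.
\]
Using $\ell^p=w^p/y^p$, this reduces to
\[
 \frac{1-\Psi(y)}{y^p}\ge-\int_0^w z^{-p}P'(z)\dd z .
\]
By the same ODE, $1-\Psi(y)=1-G(y)+P(y)=P(y)-y^p\int_0^y z^{-p}P'$, so the left side equals
\[
 y^{-p}P(y)-\int_0^y z^{-p}P'(z)\dd z .
\]
For $w\le y$, the right side is at most
\[
 \max\Bigl(0,\,-\int_0^y z^{-p}P'\Bigr),
\]
because $\int_0^w z^{-p}P'$ is unimodal in $w$ with its minimum at an endpoint. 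Since $P(y)\ge0$, we get $y^{-p}P(y)-\int_0^y z^{-p}P'\ge-\int_0^y z^{-p}P'$. It remains to check the case where the maximum is $0$, which needs the left side to be nonnegative. That holds because $1-\Psi(y)>0$ by \Cref{lem:Psi-contraction}.

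This yields $H_y'\le0$. The endpoint kernels $r=0$ and $r=n$ follow from the same computation, using $P'\le0$ and \Cref{lem:Psi-contraction} for every $0<p<1$.
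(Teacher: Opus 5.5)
Your argument is correct and is essentially the paper's. After your substitution $t=\ell\sigma$ and the $\Psi$-bookkeeping, the condition $pH_y(\ell)\le P(y\ell)$ becomes exactly the paper's inequality $y^p\int_{y\ell}^{y}z^{-p}P'(z)\dd z\le P(y)$. You close it with the same three facts: the partial integral of $z^{-p}P'$ attains its extremum on $[0,y]$ at an endpoint, because $P'$ changes sign at most once, from positive to negative; $P(y)\ge0$; and $\Psi(y)<1$.

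That final argument works for every $0<w\le y$, so the preliminary attempt via $G\ge1$ and the regime split are unnecessary. One small justification slip: $G(0)=1$ does not by itself determine the solution of $wG'=wP'+p(G-1)$, since $w^p$ solves the homogeneous equation and also vanishes at $0$. The identity $G(w)-1=w^p\int_0^w z^{-p}P'(z)\dd z$ follows instead from $G-1=O(w)=o(w^p)$, or directly by integrating by parts in the definition of $\Psi$, as in \eqref{eq:J-zero}.
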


\begin{proof}
The case $y=0$ is immediate.  Assume $y>0$.  Differentiating and changing variables $z=y\ell/t$, we find
\begin{equation}\label{eq:H-derivative}
 H_y'(\ell)=\ell^{p-1}
 \left[-P(y)+y^p\int_{y\ell}^{y}z^{-p}P'(z)\dd z\right].
\end{equation}
For fixed $y$, write
\[
 J(a):=\int_a^y z^{-p}P'(z)\dd z,
 \qquad 0\le a\le y.
\]
If $P'\le0$ on $(0,y)$, then $J(a)\le0$, and
\eqref{eq:H-derivative} is nonpositive.  Otherwise,
\eqref{eq:P-prime-factorization} shows that $P'$ has at most one sign
change on $[0,y]$, from positive to negative.  Hence
\[
 J'(a)=-a^{-p}P'(a)
\]
has at most one sign change, from negative to positive.  Thus $J$ has
no strict interior maximum, and its maximum on $[0,y]$ is attained at
an endpoint. Now $J(y)=0$.  Since $P(z)-1=O(z)$ and $p<1$, the boundary term
$z^{-p}(P(z)-1)$ vanishes as $z\downarrow0$.  Integration by parts
and \eqref{eq:Psi-definition} therefore give
\begin{equation}\label{eq:J-zero}
 y^pJ(0)=P(y)-1+\Psi(y)<P(y).
\end{equation}
Therefore
\[
 y^pJ(y\ell)\le\max\{y^pJ(0),0\}\le P(y),
\]
and \eqref{eq:H-derivative} is nonpositive.  For $r=0$, the assertion follows immediately from $P\equiv1$; for
$r=n$, one has $P(z)=(1-z)^n$ and $P'\le0$, so the first case applies.
\end{proof}

\subsection{The moment estimate}

\begin{proof}[Proof of \Cref{thm:general-lower-intro}]
We first treat a semiconcave supersolution.  Apply \Cref{lem:localization} and denote the localized function again by $u$.  It remains semiconcave: after increasing the semiconcavity constant if necessary, the two functions entering the minimum have a common constant, and their minimum is semiconcave.  Write $\Omega=B_R$, and let $a_0$ be as in \Cref{lem:localization}.  Fix
\[
 0<p<\underline p_n(\kappa),
\]
and choose $\rho$ and $\theta$ from \Cref{lem:large-rho-contraction}.

For $L\ge\rho a_0$, put
\begin{equation}\label{eq:ML-definition}
 M_L:=p\int_{a_0}^L a^{p-1}m(a)\dd a,
 \qquad m(a)=\abs{\{\tau>a\}}.
\end{equation}
Multiply \eqref{eq:shell-inequality} by $pa^{p-1}$, integrate over $a\in[a_0,L]$, and use Tonelli's theorem.  We organize the resulting integral according to $T=\tau(x)$.

If $a_0<T<\rho a_0$, the aperture interval is truncated at its lower end.  Since the finitely many kernels are bounded on $[0,1]$, the contribution of each such point is at most $Ca_0^p$.  After integration in $x$, this range contributes at most $Ca_0^p\abs{\Omega}$.

If $\rho a_0\le T\le L$, the complete interval $[T/\rho,T]$ occurs.  By \eqref{eq:uniform-full-contraction}, the contribution is at most $\theta T^p$.

Finally, let $L<T\le\rho L$ and write $T=\rho\ell L$, where $\ell\in[1/\rho,1]$.  With $y=s(x)/T$ and $a=Lt$, the contribution divided by $L^p$ is
\[
 p\left(\frac\rho{\rho-1}\right)^nH_y(\ell).
\]
By \Cref{lem:upper-cutoff}, this is no larger than its value at $\ell=1/\rho$, which is precisely the full coefficient in \eqref{eq:full-aperture-coefficient}.  It is therefore at most $\theta$.  Points with $T>\rho L$ occur in none of the shells under consideration.

Enlarging $\{L<T\le\rho L\}$ to $\{T>L\}$, we have proved
\begin{equation}\label{eq:ML-inequality}
 M_L\le Ca_0^p\abs{\Omega}
 +\theta\left(
 \int_{\{a_0<\tau\le L\}}\tau^p\dd x
 +L^p\abs{\{\tau>L\}}\right).
\end{equation}
The layer-cake identity gives
\begin{equation}\label{eq:layer-identity}
 \int_{\{a_0<\tau\le L\}}\tau^p\dd x
 +L^p\abs{\{\tau>L\}}
 =M_L+a_0^pm(a_0).
\end{equation}
Combining \eqref{eq:ML-inequality} and \eqref{eq:layer-identity}, and using $m(a_0)\le\abs{\Omega}$, we obtain
\[
 (1-\theta)M_L\le Ca_0^p\abs{\Omega}
\]
uniformly in $L$.  Letting $L\to\infty$ in the preceding identity and using monotone
convergence, we obtain
\[
 \int_{\{\tau>a_0\}}\tau^p\,\dd x
 \le
 \lim_{L\to\infty}M_L+a_0^p m(a_0)
 \le C.
\]
The complementary set contributes at most $a_0^p\abs{\Omega}$, and hence
\begin{equation}\label{eq:localized-moment}
 \int_\Omega\tau(x)^p\dd x\le C(n,\kappa,p).
\end{equation}
Here we have used that $a_0$ and $\Omega$ were fixed in \Cref{lem:localization}.

Return to the normalization in that lemma.  If
\[
 v=1+\frac{\eta}{2\norm{u}_\infty}
 (u+\norm{u}_\infty),
\]
then
\[
 \ThetaU(v,B_1)=\frac{\eta}{2\norm{u}_\infty}\ThetaU(u,B_1).
\]
The last assertion of \Cref{lem:localization} and \eqref{eq:localized-moment} prove \eqref{eq:main-general-estimate-intro} for semiconcave supersolutions.

For a general continuous supersolution, take the inf-convolutions in \Cref{lem:inf-convolution}.  Applying the semiconcave estimate to $u_j$ in $B_{R_j}$ and scaling to the unit ball gives
\begin{equation}\label{eq:inf-convolution-estimate}
 \int_{B_{R_j/2}}\ThetaU(u_j,B_{R_j})^p\dd x
 \le C R_j^{n-2p}\norm{u}_{L^\infty(B_1)}^p
 \le C\norm{u}_{L^\infty(B_1)}^p,
\end{equation}
where the last inequality uses $p<1$ and $n\ge2$.  Since $R_j/2\uparrow1/2$, \eqref{eq:Theta-lower-semicontinuity} and Fatou's lemma yield \eqref{eq:main-general-estimate-intro}.  The case $u\equiv0$ is immediate.
\end{proof}

\begin{remark}[Tail estimate]\label{rem:tail-estimate}
By Chebyshev's inequality, for every $p<\underline p_n(\kappa)$,
\[
 \abs{\{x\in B_{1/2}:\ThetaU(u,B_1)(x)>t\norm{u}_\infty\}}
 \le C t^{-p},
 \qquad t>0.
\]
At points of twice differentiability, \eqref{eq:Hessian-controlled-by-Theta} follows directly from the same spectral balance used in \eqref{eq:Pucci-balance}.
\end{remark}

\section{The plane and dimension three}\label{sec:low-dimensions}

\subsection{The plane}

For $n=2$, there is only one mixed negative index, and
\begin{equation}\label{eq:planar-P}
 P_{2,1,\kappa}(z)=(1-z)(1+\kappa z).
\end{equation}
Consequently,
\begin{equation}\label{eq:planar-critical-calculation}
 \int_0^1z^{-p}P'_{2,1,\kappa}(z)\dd z
 =\frac{\kappa-1}{1-p}-\frac{2\kappa}{2-p}.
\end{equation}
For $\kappa>1$, the unique zero is
\begin{equation}\label{eq:planar-critical-root}
 p_{2,1}(\kappa)=\frac{2}{\kappa+1}.
\end{equation}
When $\kappa=1$, the integral in \eqref{eq:planar-critical-calculation} is negative for every $p<1$, and the same formula gives the endpoint value fixed in our convention.  The lower bound in \Cref{thm:planar-intro} follows from \Cref{thm:general-lower-intro}.  The reverse inequality is the planar construction of \cite[Remark~3.3]{ArmstrongSilvestreSmart}; the case $\kappa =1$ is elementary.  This proves \Cref{thm:planar-intro}.

The sharp power is also encoded by an exact identity.  If
\[
 p_*:=\frac{2}{\kappa+1},
\]
then, for $0<t<b$,
\begin{equation}\label{eq:planar-kernel-identity}
 -\frac{\dd}{\dd t}\left[(b-t)^2t^{-p_*}\right]
 =p_*(b-t)(b+\kappa t)t^{-p_*-1}.
\end{equation}

This identity exhibits $p_*$ as the exact exponent selected by the
unsupremized planar contact kernel.  The loss in the scalar discrete
iteration arises from maximizing $(b-t)(b+\kappa t)$ before integrating
in $t$.

\subsection{Dimension three}

There are now two mixed spectral configurations.  For two negative eigenvalues,
\begin{equation}\label{eq:P32}
 P_{3,2,\kappa}(z)=(1-z)^2(1+2\kappa z).
\end{equation}
A direct beta-function calculation gives
\begin{equation}\label{eq:p32}
 p_{3,2}(\kappa)=\frac{3}{2\kappa+1}.
\end{equation}
For one negative eigenvalue,
\begin{equation}\label{eq:P31}
 P_{3,1,\kappa}(z)=(1-z)\left(1+\frac\kappa2z\right)^2.
\end{equation}
Writing the critical integral through the coefficients of this cubic shows that its zero in $(0,1)$ satisfies
\begin{equation}\label{eq:p31-quadratic}
 (\kappa+2)^2p^2-(\kappa+2)(\kappa+10)p+24=0.
\end{equation}
Therefore
\begin{equation}\label{eq:p31}
 p_{3,1}(\kappa)
 =\frac{\kappa+10-\sqrt{\kappa^2+20\kappa+4}}{2(\kappa+2)}.
\end{equation}
The formula extends continuously to $p_{3,1}(1)=1$.  The second root of \eqref{eq:p31-quadratic} is larger than one, since the quadratic is negative at $p=1$ whenever $\kappa>1$.  Substituting $3/(2\kappa+1)$ into its left-hand side gives
\begin{equation}\label{eq:3d-comparison-factor}
 -\frac{6\kappa(\kappa-1)(\kappa-4)}{(2\kappa+1)^2}.
\end{equation}
It follows that
\begin{equation}\label{eq:3d-index-comparison}
 p_{3,2}(\kappa)\le p_{3,1}(\kappa)
 \quad\Longleftrightarrow\quad
 1\le\kappa\le4.
\end{equation}
This proves the lower estimate \eqref{eq:3d-lower-intro}.  The upper bound furnished by the packed radial-profile construction \eqref{eq:radial-upper-intro}, proved in \cite[Theorem~1, equation~(1.5)]{NascimentoTeixeiraJMPA}, then yields \eqref{eq:3d-exact-intro} and \eqref{eq:3d-gap-intro}.  This completes the proof of \Cref{thm:three-dimensional-intro}.

\begin{remark}[The transition at $\kappa=4$]\label{rem:kappa-four}
For a symmetric matrix $M\in\mathcal S^3$ with eigenvalues
$\nu_1,\nu_2,\nu_3$, write
\[
 \sigma_2(M):=
 \nu_1\nu_2+\nu_1\nu_3+\nu_2\nu_3.
\]
Suppose that, at a point with one negative direction, $D^2u$ has
eigenvalues
\[
 -q,\mu_1,\mu_2,
 \qquad
 q>0,\quad \mu_1,\mu_2\ge0.
\]
The Pucci inequality gives
\[
 \mu_1+\mu_2\le\kappa q.
\]
Consequently,
\begin{align*}
 \sigma_2(D^2u)
 &=\mu_1\mu_2-q(\mu_1+\mu_2)\\
 &\le
 \frac14(\mu_1+\mu_2)^2-q(\mu_1+\mu_2).
\end{align*}
If $\kappa\le4$, then $\mu_1+\mu_2\le4q$, and the right-hand side is
nonpositive.

For $\kappa>4$, this conclusion fails sharply.  Indeed, the matrix with
eigenvalues
\[
 -q,\frac{\kappa q}{2},\frac{\kappa q}{2}
\]
lies on the boundary of the Pucci cone, since
\[
 \Mminus\left(
 \operatorname{diag}
 \left(-q,\frac{\kappa q}{2},\frac{\kappa q}{2}\right)
 \right)
 =\lambda\kappa q-\Lambda q=0,
\]
whereas
\[
 \sigma_2
 =
 \frac{\kappa^2q^2}{4}-\kappa q^2
 =
 \frac{\kappa(\kappa-4)}{4}q^2>0.
\]
Thus $\kappa=4$ is exactly the value at which the one-negative-direction
portion of the Pucci cone ceases to be contained in
$\{\sigma_2\le0\}$.  The same transition is detected, without
approximation, by \eqref{eq:3d-index-comparison}.
\end{remark}

\section{Higher-dimensional consequences}\label{sec:higher-dimensions}

The sharp planar threshold and the exact three-dimensional regime are
the main conclusions of this paper.  We record here three consequences
of the general kernels that place the method in the context of the
earlier higher-dimensional theory.

\begin{proposition}\label{prop:last-index}
For every $n\ge2$ and $\kappa\ge1$,
\begin{equation}\label{eq:last-index-exact}
 p_{n,n-1}(\kappa)=\frac{n}{(n-1)\kappa+1}.
\end{equation}
\end{proposition}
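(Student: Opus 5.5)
For $r=n-1$ the kernel in \eqref{eq:index-kernel-intro} is
\[
 P(z):=P_{n,n-1,\kappa}(z)=(1-z)^{n-1}\bigl(1+(n-1)\kappa z\bigr),
\]
and I will evaluate the critical integral $\mathcal I_{n,n-1,\kappa}(p)$ of \Cref{lem:critical-exponent} in closed form using beta functions. The aim is to exhibit it as a beta value times a linear factor in $p$.

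\textbf{Derivative.} By \eqref{eq:P-prime-factorization} with $r=n-1$, the middle factor is $1$, and
\[
 P'(z)=(n-1)(1-z)^{n-2}\bigl[(\kappa-1)-\kappa n z\bigr].
\]

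\textbf{Beta integrals.} For $0\le p<1$,
\[
 \int_0^1 z^{-p}(1-z)^{n-2}\,dz=B(1-p,n-1),
 \qquad
 \int_0^1 z^{1-p}(1-z)^{n-2}\,dz=B(2-p,n-1).
\]
The recursion $B(x+1,y)=B(x,y)\,x/(x+y)$ with $x=1-p$, $y=n-1$ gives
\[
 B(2-p,n-1)=B(1-p,n-1)\,\frac{1-p}{n-p}.
\]

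\textbf{Closed form.} Combining,
\[
 \mathcal I_{n,n-1,\kappa}(p)
 =(n-1)B(1-p,n-1)\left[(\kappa-1)-\frac{\kappa n(1-p)}{n-p}\right].
\]
Since the beta factor is positive, the sign of $\mathcal I$ is the sign of
\[
 (\kappa-1)(n-p)-\kappa n(1-p)=p\bigl((n-1)\kappa+1\bigr)-n.
\]
This vanishes exactly at $p=n/((n-1)\kappa+1)$.

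\textbf{Conclusion for $\kappa>1$.} Then $(n-1)\kappa+1>n$, so this root lies in $(0,1)$. It is negative to the left of the root, consistent with \eqref{eq:I-sign}. By the uniqueness in \Cref{lem:critical-exponent}, it equals $p_{n,n-1}(\kappa)$.

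\textbf{The case $\kappa=1$.} The formula gives $n/n=1$, which matches the convention $p_{n,r}(1)=1$ fixed in the introduction.

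\textbf{Consistency checks.} For $n=2$ this reproduces \eqref{eq:planar-critical-root}, and for $n=3$ it reproduces \eqref{eq:p32}.

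The only step needing care is keeping the exponents of the beta functions correct and treating the endpoint $\kappa=1$ through the convention rather than the lemma. I do not expect a genuine obstacle.
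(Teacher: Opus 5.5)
Your proposal is correct and follows the paper's proof: both evaluate $\mathcal I_{n,n-1,\kappa}(p)$ through $B(1-p,n-1)$ and $B(2-p,n-1)$, use the recurrence $B(2-p,n-1)=\frac{1-p}{n-p}B(1-p,n-1)$ to reduce the vanishing condition to the linear equation $p\bigl((n-1)\kappa+1\bigr)=n$, and handle $\kappa=1$ through the convention $p_{n,r}(1)=1$. The only cosmetic differences are that the paper writes $a=(n-1)\kappa$ where you factor out $(n-1)$, and that you spell out the sign comparison with \eqref{eq:I-sign}, which the paper leaves implicit.
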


\begin{proof}
The assertion at $\kappa=1$ follows from the convention $p_{n,r}(1)=1$.  Assume $\kappa>1$ and put $a=(n-1)\kappa$.  Then
\[
 P_{n,n-1,\kappa}(z)=(1-z)^{n-1}(1+az)
\]
and
\[
 P'_{n,n-1,\kappa}(z)
 =(1-z)^{n-2}\big(a-(n-1)-an z\big).
\]
We write
\[
 B(\alpha,\beta)
 :=
 \int_0^1t^{\alpha-1}(1-t)^{\beta-1}\dd t,
 \qquad \alpha,\beta>0,
\]
for Euler's beta function. We have
\begin{align*}
 \int_0^1z^{-p}P'_{n,n-1,\kappa}(z)\dd z
 ={}&\big(a-(n-1)\big)B(1-p,n-1)\\
 &-anB(2-p,n-1).
\end{align*}
Using
\[
 B(2-p,n-1)=\frac{1-p}{n-p}B(1-p,n-1),
\]
one sees that the integral vanishes exactly at $p=n/(a+1)$.
\end{proof}

Thus the branch with $n-1$ negative eigenvalues always reproduces the radial-profile upper scale.
Its position within the kernel family follows from a comparison with
the neighboring index.

\begin{proposition}[Comparison with the radial branch]
\label{prop:radial-kernel-threshold}
For every $n\ge2$,
\begin{equation}\label{eq:radial-kernel-threshold}
 \left\{\kappa\ge1:
 \underline p_n(\kappa)=p_{n,n-1}(\kappa)\right\}
 =
 \begin{cases}
 [1,\infty),&n=2,\\
 [1,4],&n=3,\\
 \{1\},&n\ge4.
 \end{cases}
\end{equation}
Moreover, if $n\ge4$ and $\kappa>1$, then
\begin{equation}\label{eq:adjacent-branch-below-radial}
 p_{n,n-2}(\kappa)<p_{n,n-1}(\kappa).
\end{equation}
\end{proposition}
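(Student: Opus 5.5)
The plan is to split according to $n$, reduce the case $n\ge4$ to a single sign computation, and read off \eqref{eq:radial-kernel-threshold} from it.

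For $n=2$ the only mixed index is $r=1=n-1$. Hence $\underline p_2(\kappa)=p_{2,1}(\kappa)$ for every $\kappa\ge1$, and nothing is to be proved.

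For $n=3$ the minimum in \eqref{eq:underline-p-intro} is over $r\in\{1,2\}$. Then $\underline p_3=p_{3,2}$ holds exactly when $p_{3,2}\le p_{3,1}$, which by \eqref{eq:3d-index-comparison} is the interval $[1,4]$. At $\kappa=1$ every exponent equals $1$ by convention, so $\kappa=1$ always lies in the set; this also settles the point $\kappa=1$ for all $n$.

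For $n\ge4$, it suffices to prove \eqref{eq:adjacent-branch-below-radial}. Indeed, for $\kappa>1$ it gives $\underline p_n(\kappa)\le p_{n,n-2}(\kappa)<p_{n,n-1}(\kappa)$, so only $\kappa=1$ survives. To prove the strict inequality, I use the sign characterization \eqref{eq:I-sign} of \Cref{lem:critical-exponent} with $r=n-2$. Setting
\[
 p_*:=p_{n,n-1}(\kappa)=\frac{n}{(n-1)\kappa+1}\in(0,1)
\]
from \Cref{prop:last-index}, we have $p_{n,n-2}(\kappa)<p_*$ if and only if $\mathcal I_{n,n-2,\kappa}(p_*)>0$. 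So the whole proof reduces to computing one explicit integral and checking its sign.

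To compute it, put $m=n-2$ and $c=\kappa(n-2)/2$, so that $P_{n,n-2,\kappa}(z)=(1-z)^m(1+cz)^2$. By \eqref{eq:P-prime-factorization},
\[
 P'(z)=m(1-z)^{m-1}(1+cz)\bigl[(\kappa-1)-\tfrac{\kappa n}{2}z\bigr].
\]
This is $(1-z)^{m-1}$ times a quadratic in $z$, so $\int_0^1 z^{-p}P'\dd z$ is a combination of $B(1-p,m)$, $B(2-p,m)$ and $B(3-p,m)$. Using $B(k+1-p,m)=\frac{k-p}{k-p+m}B(k-p,m)$, as in the proof of \Cref{prop:last-index}, I factor out the positive quantity $B(1-p,m)$. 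This leaves a rational function of $p$ whose numerator is a polynomial $N_n(p,\kappa)$ of degree two in $p$, with denominator $(n-1-p)(n-p)>0$. Substituting $p=p_*$ and clearing the positive denominators $((n-1)\kappa+1)^2$ produces a polynomial $Q_n(\kappa)$.

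Two checks constrain $Q_n$. First, at $\kappa=1$ we have $p_*=1$, and both critical exponents equal $1$, so I expect $Q_n(\kappa)=(\kappa-1)\,R_n(\kappa)$. Second, as a consistency check, for $n=3$ the analogous computation with $r=1$ should reproduce the factor $\kappa(\kappa-1)(\kappa-4)$ of \eqref{eq:3d-comparison-factor}. The task is then to show $R_n(\kappa)>0$ for all $\kappa>1$ when $n\ge4$. My first attempt is to substitute $\kappa=1+x$ and show that all coefficients of $R_n$ in $x$ are nonnegative, with at least one positive, for every $n\ge4$.

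The main obstacle is this last positivity step, which must hold uniformly in $n$. The coefficients of $R_n$ depend polynomially on $n$, and the analogue of the root $\kappa=4$ from $n=3$ must be shown to disappear, for instance by moving to $\kappa\le1$, as soon as $n\ge4$. If the coefficient check fails for small $x$, the fallback is to verify $R_n(1)>0$ and $R_n'(\kappa)>0$ on $(1,\infty)$ separately, treating $n=4$ by hand if necessary.
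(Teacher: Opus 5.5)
This is essentially the paper's argument: evaluate $\mathcal I_{n,n-2,\kappa}$ at $q=p_{n,n-1}(\kappa)$ using the beta recurrences, and read off the sign through \eqref{eq:I-sign}. (The paper also obtains the case $n=3$ from this same formula, rather than from \eqref{eq:3d-index-comparison}.) Carrying out your computation gives
\[
 \mathcal I_{n,n-2,\kappa}(q)=(n-2)B(1-q,n-2)\,\frac{(\kappa-1)\bigl(\kappa(n-2)^2-4\bigr)}{4\bigl(\kappa(n-1)^2-1\bigr)},
\]
so the positivity step you flag as the main obstacle is immediate: $\kappa(n-2)^2-4\ge4(\kappa-1)>0$ for $n\ge4$ and $\kappa>1$. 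One caution: for $n=4$ this factor vanishes at $\kappa=1$, so your fallback ``$R_n(1)>0$'' would fail there, whereas your coefficient check in $x=\kappa-1$ goes through.
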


\begin{proof}
Assume $n\ge3$ and $\kappa>1$, and put
$q:=p_{n,n-1}(\kappa)=n/((n-1)\kappa+1)$.
Expanding \eqref{eq:P-prime-factorization} with $r=n-2$ and using
the beta-function recurrences gives
\begin{equation}\label{eq:radial-adjacent-comparison}
 \mathcal I_{n,n-2,\kappa}(q)
 =
 (n-2)B(1-q,n-2)
 \frac{
   (\kappa-1)\bigl(\kappa(n-2)^2-4\bigr)
 }{
   4\bigl(\kappa(n-1)^2-1\bigr)
 }.
\end{equation}
By \eqref{eq:I-sign}, $q\le p_{n,n-2}(\kappa)$ if and only if the
right-hand side is nonpositive.  For $n=3$, its sign is that of
$\kappa-4$, and $r=1$ is the only competing index.  For $n\ge4$,
it is strictly positive, proving
\eqref{eq:adjacent-branch-below-radial}.  The case $\kappa=1$
follows from the convention $p_{n,r}(1)=1$; for $n=2$, the radial
index is the only mixed negative index.
\end{proof}

This comparison concerns the kernel family; sharpness of the radial
upper bound in the remaining regimes is a separate question.

\begin{proposition}[Fixed-dimensional high-contrast behavior]\label{prop:fixed-dimensional-high-contrast}
Fix $n\ge2$ and $1\le r\le n-1$.  As $\kappa\to\infty$,
\begin{equation}\label{eq:fixed-n-asymptotic}
 p_{n,r}(\kappa)
 \sim
 \binom nr\frac{(n-r)^{n-r+1}}{r^{n-r}}\,\kappa^{-(n-r)}.
\end{equation}
Consequently,
\begin{equation}\label{eq:unrestricted-high-contrast}
 \underline p_n(\kappa)
 \sim n(n-1)^n\kappa^{-(n-1)}.
\end{equation}
\end{proposition}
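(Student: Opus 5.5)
The plan is to work with the integrated-by-parts form of the critical equation and read off the dominant power of $\kappa$. By \eqref{eq:Psi-at-one}, $\mathcal I_{n,r,\kappa}(p)=\Psi_{n,r,\kappa,p}(1)-1$, so the zero $p=p_{n,r}(\kappa)$ is characterized by
\[
 p\int_0^1 z^{-p-1}\bigl(P_{n,r,\kappa}(z)-1\bigr)\dd z=1 .
\]
Put $c:=\kappa r/(n-r)$ and $G_\kappa(p):=c^{-(n-r)}\int_0^1 z^{-p-1}(P_{n,r,\kappa}(z)-1)\dd z$. Expanding $(1+cz)^{n-r}$ gives
\[
 c^{-(n-r)}\bigl(P_{n,r,\kappa}(z)-1\bigr)=(1-z)^r z^{n-r}+O(\kappa^{-1})\,z ,
\]
uniformly for $z\in[0,1]$, where the error term also absorbs the $-1$. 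Since $n-r\ge1$, the weight $z^{-p-1}\cdot z$ is integrable uniformly for $p\in[0,1/2]$. Hence $G_\kappa(p)\to G(p):=\int_0^1 z^{n-r-p-1}(1-z)^r\dd z=B(n-r-p,r+1)$ uniformly on $[0,1/2]$, and $G$ is continuous and positive there.

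\textbf{Step 1: the critical exponent tends to zero.} I first show that $p_{n,r}(\kappa)\le 1/2$ for large $\kappa$. At $p=1/2$, the identity above gives $\mathcal I(1/2)=\tfrac12 c^{n-r}G_\kappa(1/2)-1$, which is positive once $\kappa$ is large. By the sign characterization \eqref{eq:I-sign}, this forces $p_{n,r}(\kappa)<1/2$. Now $p_{n,r}(\kappa)$ solves $p\,G_\kappa(p)=c^{-(n-r)}$ with $p\in(0,1/2)$. Since $G_\kappa\ge \tfrac12\min_{[0,1/2]}G>0$ eventually, we get $p_{n,r}(\kappa)=O(\kappa^{-(n-r)})\to0$.

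\textbf{Step 2: identify the constant.} Because $p_{n,r}(\kappa)\to0$, uniform convergence of $G_\kappa$ together with continuity of $G$ gives $G_\kappa(p_{n,r}(\kappa))\to G(0)=B(n-r,r+1)$. Evaluating,
\[
 B(n-r,r+1)=\frac{(n-r-1)!\,r!}{n!}=\frac{1}{(n-r)\binom nr}.
\]
Therefore
\[
 p_{n,r}(\kappa)\sim \frac{c^{-(n-r)}}{B(n-r,r+1)}=(n-r)\binom nr\Bigl(\frac{n-r}{r\kappa}\Bigr)^{n-r},
\]
which is \eqref{eq:fixed-n-asymptotic}.

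\textbf{Step 3: the minimum over $r$.} Each $p_{n,r}$ decays like $\kappa^{-(n-r)}$, so for large $\kappa$ the minimum in \eqref{eq:underline-p-intro} is attained at $r=1$, and all other indices are of larger order. This gives $\underline p_n(\kappa)\sim n(n-1)^n\kappa^{-(n-1)}$, i.e.\ \eqref{eq:unrestricted-high-contrast}.

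The only delicate point is the uniformity needed to pass from $G_\kappa$ to $G(0)$ while $p$ also moves with $\kappa$. This is why Step 1 first confines $p$ to the fixed compact interval $[0,1/2]$, where the integrands are dominated independently of $p$.
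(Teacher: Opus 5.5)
Your proof is correct and follows essentially the paper's route. You use the identity $\mathcal I_{n,r,\kappa}(p)=p\int_0^1 z^{-p-1}(P_{n,r,\kappa}-1)\dd z-1$ (your $\Psi(1)-1$) and a leading-order expansion of that integral, uniform in $p$, with coefficient $(\kappa r/(n-r))^{n-r}B(n-r-p,r+1)$. You then use the sign characterization \eqref{eq:I-sign} to force $p_{n,r}(\kappa)\to0$ and evaluate at $p=0$ via $B(n-r,r+1)=1/((n-r)\binom nr)$. The only differences are cosmetic: you test the single value $p=1/2$ and read off the a priori bound $p_{n,r}(\kappa)=O(\kappa^{-(n-r)})$ directly, whereas the paper tests an arbitrary $q\in(0,1)$.
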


\begin{proof}
Put
\[
 m:=n-r,
 \qquad
 p_\kappa:=p_{n,r}(\kappa),
\]
and define
\[
 J_\kappa(p):=
 \int_0^1z^{-p-1}
 \bigl(P_{n,r,\kappa}(z)-1\bigr)\dd z.
\]
Integration by parts gives
\begin{equation}\label{eq:I-J-identity}
 \mathcal I_{n,r,\kappa}(p)
 =pJ_\kappa(p)-1
 \qquad(0<p<1).
\end{equation}
In particular, at the critical exponent,
\begin{equation}\label{eq:critical-J-identity}
 1=p_\kappa J_\kappa(p_\kappa).
\end{equation}

For $p$ in any compact subinterval of $[0,1)$, expansion in powers of
$\kappa$ gives, uniformly in $p$,
\begin{equation}\label{eq:J-high-contrast-expansion}
 J_\kappa(p)
 =
 \left(\frac{\kappa r}{m}\right)^m
 B(m-p,r+1)+O(\kappa^{m-1}).
\end{equation}
Fix $q\in(0,1)$.  By \eqref{eq:J-high-contrast-expansion},
\[
 J_\kappa(q)\longrightarrow+\infty,
\]
and hence \eqref{eq:I-J-identity} gives
\[
 \mathcal I_{n,r,\kappa}(q)>0
\]
for all sufficiently large $\kappa$.  The sign characterization in
\Cref{lem:critical-exponent} then implies
\[
 p_\kappa<q.
\]
Since $q\in(0,1)$ was arbitrary,
\begin{equation}\label{eq:critical-root-tends-zero}
 p_\kappa\longrightarrow0.
\end{equation}

We may therefore substitute $p=p_\kappa$ into the uniform expansion
\eqref{eq:J-high-contrast-expansion}.  Using
\eqref{eq:critical-J-identity} and
\eqref{eq:critical-root-tends-zero}, we obtain
\begin{align*}
 p_\kappa
 &\sim
 \left(\frac{m}{\kappa r}\right)^m
 \frac{1}{B(m,r+1)}\\
 &=
 \binom nr
 \frac{m^{m+1}}{r^m}\,\kappa^{-m},
\end{align*}
because
\[
 B(m,r+1)=\frac{1}{m\binom nr}.
\]
Recalling that $m=n-r$ proves
\eqref{eq:fixed-n-asymptotic}.

Finally, the powers $\kappa^{-(n-r)}$ are distinct.  The smallest branch
for sufficiently large $\kappa$ is therefore $r=1$, for which
$m=n-1$.  Hence
\[
 \underline p_n(\kappa)
 \sim n(n-1)^n\kappa^{-(n-1)},
\]
which is \eqref{eq:unrestricted-high-contrast}.
\end{proof}

For comparison, the lower estimate from \cite[Theorem~1]{NascimentoTeixeiraJMPA} is
\begin{equation}\label{eq:old-lower-bound}
 L_n^{\mathrm{NT}}(\kappa)
 :=\frac{\mu_n}{\log n}
 \left(1+\left(1-\frac1\kappa\right)
 \left(1-\frac1n\right)\right)^{n-1}
 \kappa^{-(n-1)},
\end{equation}
where $\mu_n\in(1/4,1)$ is the explicit dimensional constant appearing there.  Hence, for fixed $n\ge3$,
\[
 L_n^{\mathrm{NT}}(\kappa)
 \sim
 \frac{\mu_n}{\log n}\left(2-\frac1n\right)^{n-1}
 \kappa^{-(n-1)},
\]
whereas \eqref{eq:unrestricted-high-contrast} gives
\[
 \underline p_n(\kappa)
 \sim n(n-1)^n\kappa^{-(n-1)}.
\]
Thus the present method improves the leading dimensional constant, but not the high-contrast power.  We include this consequence to clarify the reach of the general kernel family; no sharpness is asserted for $n\ge4$.


\begin{thebibliography}{99}

\bibitem{ArmstrongSilvestreSmart}
S.~N. Armstrong, L.~Silvestre, and C.~K. Smart,
\emph{Partial regularity of solutions of fully nonlinear, uniformly elliptic equations},
Comm. Pure Appl. Math. \textbf{65} (2012), no.~8, 1169--1184.

\bibitem{Caffarelli}
L.~A. Caffarelli,
\emph{Interior a priori estimates for solutions of fully nonlinear equations},
Ann. of Math. (2) \textbf{130} (1989), no.~1, 189--213.

\bibitem{CaffarelliCabre}
L.~A. Caffarelli and X.~Cabr\'e,
\emph{Fully nonlinear elliptic equations},
American Mathematical Society Colloquium Publications, vol.~43,
American Mathematical Society, Providence, RI, 1995.

\bibitem{CIL}
M.~G. Crandall, H.~Ishii, and P.-L. Lions,
\emph{User's guide to viscosity solutions of second order partial differential equations},
Bull. Amer. Math. Soc. (N.S.) \textbf{27} (1992), no.~1, 1--67.

\bibitem{EvansGariepy}
L.~C. Evans and R.~F. Gariepy,
\emph{Measure theory and fine properties of functions},
Textbooks in Mathematics, revised ed.,
CRC Press, Boca Raton, FL, 2015.


\bibitem{Le}
N.~Q. Le,
\emph{Polynomial decay in $W^{2,\varepsilon}$ estimates for viscosity supersolutions of fully nonlinear elliptic equations},
Math. Res. Lett. \textbf{27} (2020), no.~1, 189--207.

\bibitem{Lin}
F.-H. Lin,
\emph{Second derivative $L^p$-estimates for elliptic equations of nondivergent type},
Proc. Amer. Math. Soc. \textbf{96} (1986), no.~3, 447--451.

\bibitem{Mooney}
C.~Mooney,
\emph{A proof of the Krylov--Safonov theorem without localization},
Comm. Partial Differential Equations \textbf{44} (2019), no.~8, 681--690.

\bibitem{NascimentoTeixeiraJMPA}
T.~M. Nascimento and E.~V. Teixeira,
\emph{New regularity estimates for fully nonlinear elliptic equations},
J. Math. Pures Appl. (9) \textbf{171} (2023), 1--25.

\bibitem{NascimentoTeixeiraJDE}
T.~M. Nascimento and E.~V. Teixeira,
\emph{On the sharp Hessian integrability conjecture in the plane},
J. Differential Equations \textbf{414} (2025), 890--903.

\end{thebibliography}
\end{document}